\numberwithin{equation}{section}
\DeclareFontFamily{OT1}{rsfs}{}
\DeclareFontShape{OT1}{rsfs}{n}{it}{<-> rsfs10}{}
\DeclareMathAlphabet{\mathscr}{OT1}{rsfs}{n}{it}
\theoremstyle{plain}
\newtheorem{theorem}{Theorem}[section]
\newtheorem{proposition}[theorem]{Proposition}
\newtheorem{lemma}[theorem]{Lemma}
\newtheorem{corollary}[theorem]{Corollary}
\newtheorem{conjecture}[theorem]{Conjecture}
\theoremstyle{definition}
\newtheorem{remark}[theorem]{Remark}
\newcommand\R{\mathbb{R}}
\newcommand\Z{\mathbb{Z}}
\newcommand\N{\mathbb{N}}
\newcommand\C{\mathbb{C}}
\newcommand\eps{\varepsilon}
\renewcommand{\sim}{\asymp} 
\renewcommand{\phi}{\varphi}
\begin{document}

\title[Elliott-Halberstam implies Vinogradov]{The Elliott-Halberstam conjecture implies the Vinogradov least quadratic nonresidue conjecture}

\author{Terence Tao}
\address{Department of Mathematics, UCLA\\
405 Hilgard Ave\\
Los Angeles CA 90095\\
USA}
\email{tao@math.ucla.edu}

\begin{abstract}  For each prime $p$, let $n(p)$ denote the least quadratic nonresidue modulo $p$.  Vinogradov conjectured that $n(p) = O(p^\eps)$ for every fixed $\eps>0$.  This conjecture follows from the generalised Riemann hypothesis, and is known to hold for almost all primes $p$ but remains open in general.  In this paper we show that Vinogradov's conjecture also follows from the Elliott-Halberstam conjecture on the distribution of primes in arithmetic progressions, thus providing a potential ``non-multiplicative'' route to the Vinogradov conjecture.  We also give a variant of this argument that obtains bounds on short centred character sums from ``Type II'' estimates of the type introduced recently by Zhang and improved upon by the Polymath project, or from bounds on the level of distribution on variants of the higher order divisor function.  In particular, we can obtain an improvement over the Burgess bound would be obtained if one had Type II estimates with level of distribution above $2/3$ (when the conductor is not cube-free) or $3/4$ (if the conductor is cube-free); morally, one would also obtain such a gain if one had distributional estimates on the third or fourth divisor functions $\tau_3, \tau_4$ at level above $2/3$ or $3/4$ respectively.   Some applications to the least primitive root are also given.
\end{abstract}

\maketitle


\section{Introduction}

For each prime $p$, let $n(p)$ denote the least natural number that is not a quadratic residue modulo $p$.  Vinogradov \cite{vin} established the asymptotic bound
\begin{equation}\label{vino}
n(p) \ll p^{\frac{1}{2\sqrt{e}}} \log^2 p
\end{equation}
for all primes $p$, and made the following conjecture:

\begin{conjecture}[Vinogradov's conjecture]\label{vinconj}  For any fixed $\eps>0$, we have $n(p) \ll p^\eps$.
\end{conjecture}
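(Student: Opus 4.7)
To establish Vinogradov's conjecture I would proceed conditionally on the Elliott--Halberstam conjecture and argue by contradiction. Assume $n(p) > y := p^\eps$ for some fixed $\eps > 0$ and infinitely many $p$. Since $\chi_p(q) = 1$ for every prime $q < y$, multiplicativity forces $\chi_p(n) = 1$ for every $y$-smooth $n$ with $1 \le n < p$, so the partial character sum $S(x) := \sum_{n \le x} \chi_p(n)$ enjoys the lower bound $S(x) \ge \Psi(x,y) - O(\log x)$ for every $x \le p$. Choosing $x = p^{c\eps}$ with $c$ a moderate constant, standard smooth-number estimates give $\Psi(x, y) = (\rho(c) + o(1))x$, so $S(x) \gg x$. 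The strategy is thus to upper bound $S(x)$ for $x$ well below the Burgess range $p^{1/4 + o(1)}$ in order to contradict this lower bound.

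The plan for the upper bound is to combine a Heath--Brown identity with the distributional information provided by EH. Splitting $S(x)$ according to the multiplicative structure of the summation variable, one rewrites it as a combination of Type I sums $\sum_{m \asymp M} a_m \sum_{n \asymp N} \chi_p(mn)$ and Type II sums $\sum_{m \asymp M} \sum_{n \asymp N} a_m b_n \chi_p(mn)$ with $MN \asymp x$ across various dyadic ranges. Type I sums are controlled by Polya--Vinogradov or Burgess applied to the inner sum once $N$ is sufficiently large. The Type II sums are tackled by Cauchy--Schwarz in the $m$-variable, reducing to an autocorrelation amenable to Weil-type bounds, followed by invoking EH as a level-of-distribution statement for primes modulo $p$. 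At level $1 - \eps$, this controls all Type II ranges compatible with $MN \asymp x$ and delivers $S(x) = o(x)$ for arbitrarily small $x = p^{c\eps}$, producing the desired contradiction.

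The hardest part, I expect, is extracting bilinear cancellation from EH. The single-modulus consequence of EH, $\max_a |\pi(X; p, a) - \pi(X)/\varphi(p)| \ll X/\log^A X$ for $X \ge p^{1+\eta}$, by itself yields nothing better than the trivial bound for $\sum_a \chi_p(a) E_a(X)$, so one must exploit bilinear structure and apply EH only after Cauchy--Schwarz, so that one variable is treated as a smooth weight while the other carries a prime-like distribution, in parallel with the Zhang--Polymath approach to bounded gaps between primes. This is the mechanism the abstract highlights: full EH suffices for Vinogradov, while even weaker Type II distribution inputs at level above $2/3$ or $3/4$ (depending on cube-freeness of the conductor) already yield improvements over Burgess, and calibrating the quantitative interplay between the level of distribution and the resulting character-sum savings is the most delicate point of the argument.
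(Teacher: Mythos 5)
Your lower-bound step (large $n(p)$ forces a large character sum over an interval slightly longer than $p^{\eps}$, via smooth numbers) is in the right spirit, although as written the inequality should be $S(x)\ge 2\Psi(x,y)-x+O(1)$ rather than $\Psi(x,y)-O(\log x)$, and, more importantly, the paper needs the anomaly to persist at scales $x$ \emph{larger than} $p$: it propagates the bias from $[1,p^{\kappa}]$ up to $[q^{t_1},q^{t_2}]$ with $t_1>1$ for the $\Lambda$-weighted sum, using Wirsing's integral equation and a Laplace-transform argument to show the limiting density $b$ cannot be compactly supported. Your proposal never produces information at scales beyond $p$, and that is fatal for the second half.

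The genuine gap is in the upper bound. The Elliott--Halberstam conjecture controls $\Lambda$ in progressions to moduli $r\le x^{\vartheta}$ where $x$ is the \emph{length of the summation range}. In your setup $x=p^{c\eps}$, so the modulus $p$ is astronomically beyond any level of distribution; no amount of Heath--Brown decomposition or Cauchy--Schwarz lets EH say anything about residues mod $p$ sampled from an interval of length $p^{c\eps}$ (and invoking Burgess on the Type I inner ranges is both circular and impossible, since it needs inner length $\ge p^{1/4+o(1)}$ while $MN\asymp p^{c\eps}$). The idea you are missing is the one the paper is built on: work on an interval of length $x\ge q^{1+c}$, use the \emph{periodicity} of $\chi$ to shift $n\mapsto n+q$, and study the correlation $\sum_{n\in[x/2,x]}\chi(n)(\Lambda(n+q)-1)$, which the lower-bound step shows is $\gg x$. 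One then expands $\chi=1*(\chi*\mu)$ and observes that $\chi*\mu$ is supported on $q^{\kappa}$-rough numbers (precisely because $n_\chi>q^{\kappa}$), so the resulting divisor-sum moduli $r$ lie in $[q^{\kappa},x^{1-\nu}]$, i.e.\ within the EH range relative to $x$; EH applied to the classes $n\equiv q\ (r)$, together with a sieve bound for the small-$d$ contribution and the prime number theorem, forces the correlation to be $O(\nu x)$, a contradiction. It is the shift by $q$ that converts the multiplicative problem into a distributional one where EH bites; without it, EH is simply not applicable to your sums.
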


(See Section \ref{notation} below for our conventions on asymptotic notation.)  Linnik \cite{linnik} showed that this conjecture follows\footnote{In fact, the conjecture follows from even very weak fragments of this hypothesis; see e.g. \cite[Theorem 10.6]{bateman}.  (Thanks to Kevin Ford for this reference.)  The strongest result in this direction comes from a very recent work of Granville and Soundararajan \cite{gs-new} (see also \cite{banks}), who showed (roughly speaking) that the only way this conjecture can fail is if a positive proportion of low-lying zeroes of an $L$-function lie extremely close to the line $\operatorname{Re}(s)=1$.} from the generalised Riemann hypothesis; Ankeny \cite{ankeny} improved the bound further to
$$ n(p) \ll \log^2 p$$
on this hypothesis.
However, Conjecture \ref{vinconj} remains open unconditionally; the best bound available (up to logarithmic factors) for general primes $p$ is
\begin{equation}\label{burgess}
n(p) \ll p^{\frac{1}{4\sqrt{e}}+\eps}
\end{equation}
for any fixed $\eps>0$, a well-known result of Burgess \cite{burgess}.  It was also shown by Linnik \cite{linnik} unconditionally that for any fixed $\eps>0$, the number of $p \leq x$ with $n(p) > x^\eps$ is bounded uniformly in $x$, and hence the number of exceptions to the inequality $n(p) > p^\eps$ with $p \leq x$ is bounded by $O( \log \log x )$.

In this paper we connect Vinogradov's conjecture to a standard conjecture in sieve theory, the \emph{Elliott-Halberstam conjecture} \cite{elliott}, as well as to a restricted fragment of this conjecture recently introduced by Zhang \cite{zhang}.  The basic phenomenon being exploited here is that distribution estimates such as those given by the Elliott-Halberstam conjecture allow one to control correlations of the form\footnote{If only the original Elliott-Halberstam conjecture is available, rather than its variants, then one of the convolutions $\alpha*\beta$ or $\gamma*\delta$ needs to be replaced by the von Mangoldt function $\Lambda$.  Also, for technical reasons it is convenient to ensure that one of the factors $\alpha,\beta,\gamma,\delta$ is supported on numbers coprime to the shift $h$.}
\begin{equation}\label{abgd}
 \sum_n (\alpha * \beta)(n) (\gamma * \delta)(n+h)
\end{equation}
for various arithmetic sequences $\alpha,\beta,\gamma,\delta$ and non-trivial shifts $h$, as long as all of the sequences $\alpha,\beta,\gamma,\delta$ vanish for very small values of $n$, and provided that at least one of the sequences $\alpha,\beta,\gamma,\delta$ is ``smooth'' (e.g. if one of these sequences is an indicator function such as $1_{[N,2N]}$).  On the other hand, by combining the multiplicativity and periodicity properties of Dirichlet characters with a hypothesis that the least quadratic residue is large (or that a character sum is large), we will be able to construct sums of the form \eqref{abgd} that deviate substantially from its expected value, giving the required contradiction.  It is the periodicity of Dirichlet characters $\chi$ that allow us to introduce the shift $h$, thus transferring the problem from a multiplicative number theory problem (in which hypotheses such as the generalised Riemann hypothesis are useful) to a sieve theory problem (in which hypotheses such as the Elliott-Halberstam conjecture are useful).  The arguments share some similarities with that of Burgess \cite{burgess} (which also relies heavily on the multiplicativity and periodicity properties of Dirichlet characters), but is ultimately powered by a somewhat different source of cancellation, namely the equidistribution assumptions of Elliott-Halberstam type, rather\footnote{It is worth noting however that much of the recent partial progress on the Elliott-Halberstam conjecture has proceeded by using Weil exponential sum estimates, although the precise estimates used there are different from those used in the Burgess argument.  In Section \ref{variant-sec}, though, we sketch a version of the argument that allows for an improvement over the original bound \eqref{vino} of Vinogradov using only the elementary bound of Kloosterman \cite{kloost} on Kloosterman sums, and does not require the full strength of the Weil conjectures.} than the Weil exponential sum estimates.  

To describe the results more precisely we need some notation.
For any function $\alpha \colon \N \to \C$ with finite support (that is, $\alpha$ is non-zero only on a finite set) and any
primitive residue class $a\ (r)$, we define the (signed)
\emph{discrepancy} $\Delta(\alpha; a\ (r))$ to be the quantity
\begin{equation}\label{disc-def}
  \Delta(\alpha; a\ (r)) \coloneqq \sum_{n = a\ (r)} \alpha(n) - 
  \frac{1}{\phi(r)} \sum_{(n,r)=1} \alpha(n)
\end{equation}
where $\phi$ is the Euler totient function.

\begin{conjecture}[Elliott-Halberstam conjecture]\label{ehj}  Let $0 < \vartheta < 1$ be fixed.  Then one has
\begin{equation}\label{vrt}
\sum_{r < x^\vartheta} \sup_{a \in (\Z/r\Z)^\times} | \Delta( \Lambda 1_{[1,x]}; a\ (r) ) | \ll x \log^{-A} x
\end{equation}
for any fixed $A > 1$, where $\Lambda$ is the von Mangoldt function.  Equivalently, from the prime number theorem, one has
$$
\sum_{r < x^\vartheta} \sup_{a \in (\Z/r\Z)^\times} \left| \sum_{n \leq x: n = a\ (r)} \Lambda(n) - \frac{x}{\phi(r)} \right| \ll x \log^{-A} x
$$
for any fixed $A > 1$.
\end{conjecture}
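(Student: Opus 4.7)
The two displayed sums in Conjecture \ref{ehj} are claimed to be equivalent ``from the prime number theorem,'' so the only concrete claim inside the statement to verify is this equivalence. The plan is to show that the two expressions inside the supremum differ by a quantity that, when summed over $r < x^\vartheta$, is smaller than any negative power of $\log x$, so that either version of \eqref{vrt} transfers to the other.

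First I would unfold the definition \eqref{disc-def} and write
$$\Delta(\Lambda 1_{[1,x]}; a\ (r)) = \Bigl( \sum_{\substack{n \leq x \\ n \equiv a\ (r)}} \Lambda(n) - \frac{x}{\phi(r)} \Bigr) - E(x,r),$$
where the $a$-independent error is
$$E(x,r) \coloneqq \frac{1}{\phi(r)} \sum_{\substack{n \leq x \\ (n,r)=1}} \Lambda(n) - \frac{x}{\phi(r)}.$$
Splitting $\sum_{n \leq x,\ (n,r)=1} \Lambda(n) = \psi(x) - \sum_{p \mid r}\sum_{p^k \leq x}\log p$, invoking the Siegel--Walfisz form $\psi(x) = x + O(x \exp(-c\sqrt{\log x}))$ of the prime number theorem, and bounding the second sum trivially by $\omega(r)\log x$, I get
$$|E(x,r)| \ll \phi(r)^{-1}\bigl( x \exp(-c\sqrt{\log x}) + (\log x)^{2} \bigr).$$

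Summing over $r < x^\vartheta$ using the standard bound $\sum_{r \leq y} \phi(r)^{-1} \ll \log y$ yields
$$\sum_{r < x^\vartheta} |E(x,r)| \ll x(\log x)\exp(-c\sqrt{\log x}) + (\log x)^{O(1)},$$
which is $\ll x \log^{-A} x$ for every fixed $A > 0$. Because $E(x,r)$ is independent of $a$, the triangle inequality inside the supremum shows that the sums in the two formulations of \eqref{vrt} differ by at most this quantity, establishing the equivalence. There is really no obstacle here: once the prime number theorem with classical error term is in hand, the argument is pure bookkeeping, and the only minor subtlety is correctly handling the coprimality condition $(n,r)=1$ built into \eqref{disc-def}, which is what produces the harmless $\omega(r)\log x$ contribution.
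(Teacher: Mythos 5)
Your verification of the asserted equivalence is correct and is exactly the routine computation the paper is implicitly invoking with the phrase ``from the prime number theorem'': the two formulations differ termwise by the $a$-independent quantity $E(x,r)$, and your bound $\sum_{r<x^\vartheta}|E(x,r)| \ll x\log x\,\exp(-c\sqrt{\log x}) + (\log x)^{O(1)}$ is more than enough. (The statement itself is a conjecture, so there is nothing further to prove; only note that the error term you need is just the classical de la Vall\'ee Poussin form of the prime number theorem, not Siegel--Walfisz.)
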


The case $\vartheta < 1/2$ of this conjecture is of course (a slightly weakened form of) the Bombieri-Vinogradov theorem \cite{bombieri,vinogradov}.

Our first theorem is then

\begin{theorem}[Elliott-Halberstam implies Vinogradov]\label{eh1}  Conjecture \ref{ehj} implies Conjecture \ref{vinconj}.
\end{theorem}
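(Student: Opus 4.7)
The plan is to argue by contradiction. Suppose Vinogradov's conjecture fails: for some fixed $\eps>0$ there are arbitrarily large primes $p$ with $n(p) > y$, where $y := p^\eps$. Let $\chi$ denote the quadratic character mod $p$. By complete multiplicativity, $\chi(n) = +1$ for every integer $n$, coprime to $p$, all of whose prime factors are $\le y$ --- in particular, for every $y$-smooth integer coprime to $p$. This is the sole structural input extracted from the hypothesis, and all subsequent work consists of leveraging it against the equidistribution supplied by Conjecture \ref{ehj}.

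I would first fix a parameter $x$ slightly larger than $p$, say $x = p^{1+\delta}$ with $\delta>0$ a small fixed constant, so that the modulus $p$ lies comfortably within the EH range $r \le x^\vartheta$ for some $\vartheta < 1$. Applying Conjecture \ref{ehj} at the single modulus $r=p$ gives $\sup_a |\Delta(\Lambda\mathbf{1}_{[1,x]}; a\,(p))| \ll x \log^{-A} x$. Twisting by $\chi$ and using that $\chi$ has mean zero on $(\Z/p\Z)^{\times}$ (so the smooth main term disappears) yields the cancellation estimate $\sum_{n \le x}\Lambda(n)\chi(n) \ll x\log^{-A}x$. By standard partial summation and prime-power removal, this upgrades to $\sum_{q \le x}\chi(q) = o(\pi(x))$: primes up to $x$ are equidistributed among the quadratic residues and nonresidues mod $p$.

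The core of the argument is then to convert this prime-level cancellation into a contradiction with the ``$\chi=+1$ on $y$-smooth numbers'' input, by passing through a correlation sum of the form \eqref{abgd}. Using the periodicity $\chi(n+p)=\chi(n)$, one can reinterpret a $\chi$-twisted convolution sum as a shifted correlation with $h$ a multiple of $p$. Concretely, I expect to build sequences $\alpha,\beta,\gamma,\delta$ supported in the EH range so that $\sum_n(\alpha*\beta)(n)(\gamma*\delta)(n+h)$ admits two incompatible evaluations: (a) via the $\chi$-twisted consequence of EH just derived, forcing the correlation to hit its independent main term; and (b) directly, by choosing one factor to be supported on $n$'s with a forced $y$-smooth component, so that multiplicativity pins $\chi$ to $+1$ on those $n$'s and produces a main term of matching order but without the expected cancellation.

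The main obstacle is the calibration. The direct ``smooth $\Rightarrow \chi=+1$'' contribution is by itself too thin when $\eps$ is small (only a Dickman-$\rho(1/\eps)$ fraction of $n\le x$ is $y$-smooth), so the contradiction is not immediate from a single application. I therefore expect the proof to need a bootstrap: one iterates the input ``$\chi=+1$ on $y$-smooth'' against the EH-based equidistribution of primes mod $p$ (step 2), upgrading the set on which $\chi$ is known to be $+1$ from $y$-smooth numbers to numbers with a single large prime factor, then to two large prime factors, and so on, until the set has density noticeably exceeding $1/2$ among $n \le x$. This contradicts Pólya--Vinogradov, $\sum_{n\le x}\chi(n)\ll \sqrt p\log p$, as soon as $x\gg \sqrt p\log^2 p$, which is automatic in our range. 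Making this bootstrap quantitative --- and handling the coprimality condition $(n,p)=1$ as well as prime-power contributions to $\Lambda$ --- is the technical heart of the argument.
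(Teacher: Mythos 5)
There is a genuine gap, and it sits at the load-bearing step of your argument. You claim that applying Conjecture \ref{ehj} at the single modulus $r=p$ and ``twisting by $\chi$'' yields $\sum_{n\le x}\Lambda(n)\chi(n)\ll x\log^{-A}x$. This does not follow. Writing
$$\sum_{n\le x}\Lambda(n)\chi(n)=\sum_{a\in(\Z/p\Z)^\times}\chi(a)\,\Delta(\Lambda 1_{[1,x]};a\ (p)),$$
the main terms indeed cancel, but Elliott--Halberstam only controls each discrepancy individually (via the sup over $a$), so the triangle inequality gives a bound of order $\phi(p)\cdot x\log^{-A}x\approx x^{2-\delta'}$, far worse than trivial. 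To get genuine cancellation over primes in residue classes to a single large modulus $p$ you would need the errors $\Delta(\cdot;a\ (p))$ to cancel among themselves as $a$ varies --- essentially a quasi-GRH statement for $L(s,\chi)$ --- and EH supplies nothing of the kind. Since your step 2 is the engine driving the bootstrap in steps 3--4, the whole proposal collapses at this point. (Your final appeal to P\'olya--Vinogradov once $\chi=+1$ on a set of density $>1/2$ is fine in spirit, but you never reach it.)

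The paper's proof runs in the opposite direction and uses EH in a structurally different way. From the hypothesis $\chi=1$ on $[1,q^\kappa]$ one first shows, via the theory of mean values of multiplicative functions (Wirsing's integral equation and a Laplace-transform argument showing the density $b$ of $\chi\Lambda$ cannot be compactly supported), that $\left|\sum_{n\in[x/2,x]}\chi(n)\Lambda(n)\right|\gg x$ for some scale $x$ with $q^{t_1}\ll x\ll q^{t_2}$ and $t_1>1$; that is, the prime sum is provably \emph{large}, not small. Periodicity then transfers this to $\sum_{n}\chi(n)\Lambda(n+q)$, and $\chi$ is expanded as $1*f$ with $f=\chi*\mu$ supported on $q^\kappa$-rough numbers. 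EH is then applied with the moduli $r$ being these rough divisors (ranging up to $x^{1-\nu}$, which is why $\vartheta$ close to $1$ is needed) and with the \emph{single} residue class $q\ (r)$ per modulus --- exactly the regime EH is designed for --- to show the shifted sum is $o(x)$, giving the contradiction. If you want to salvage your approach, you must replace the single-modulus twisting step by this kind of divisor expansion, where EH is invoked with one residue class per modulus and the sum over moduli is what absorbs the averaging.
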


We prove this theorem in Section \ref{eh-sec}.  The basic idea is to observe (from the general theory of mean values of multiplicative functions) that if $n(q) > q^\eps$ for some large prime $q$, then the character sum $\sum_{n \leq x} \chi(n) \Lambda(n)$ will be anomalously large for some large $x = O(q^{O(1)})$, where $\chi$ is the quadratic character modulo $q$.  As $\chi$ is periodic modulo $q$, this forces $\sum_{n \leq x} \chi(n) \Lambda(n+q)$ to be large also.  But one can use the Elliott-Halberstam conjecture (and an expansion of $\chi$ into divisor sums, using once again the largeness of $n(q)$) to obtain good bounds for $\sum_{n \leq x} \chi(n) \Lambda(n+q)$ and obtain a contradiction.

With some additional combinatorial argument, we can obtain a similar implication\footnote{We are indebted to Felipe Voloch for suggesting this variant.} concerning the least primitive root modulo $p$, provided that $p-1$ has only boundedly many factors:

\begin{theorem}[Elliott-Halberstam bounds least primitive roots]\label{eh-a}  Assume Conjecture \ref{ehj}.  Then for any fixed $d \geq 1$ and fixed $\eps > 0$, and any prime $p$ for which $p-1$ is the product of at most $d$ primes (counting multiplicity), the least primitive residue modulo $p$ is $O( p^\eps )$.
\end{theorem}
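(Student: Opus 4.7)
The plan is to expand the indicator of primitive roots modulo $p$ via inclusion-exclusion and then apply a generalization of the character-sum bound underlying Theorem~\ref{eh1}. Factor $p - 1 = \ell_1^{a_1}\cdots\ell_k^{a_k}$ with $\sum_i a_i \leq d$, and write $R = \ell_1\cdots\ell_k = \rad(p-1)$; then $k = \omega(p-1) \leq d$. An element $n \in (\Z/p\Z)^\times$ is a primitive root iff it is not an $\ell_i$-th power residue for any $i$, so writing $\frac{1}{m}\sum_{\chi^m=1}\chi(n)$ for the indicator of $m$-th power residues modulo $p$ and expanding the resulting product yields the identity
\[
\mathbf{1}_{\mathrm{prim}}(n) = \frac{\phi(p-1)}{p-1}\sum_{e \mid R}\frac{\mu(e)}{\phi(e)}\sum_{\mathrm{ord}(\chi)=e}\chi(n)
\]
for $(n,p)=1$. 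Summing over $n \leq x$ produces a main term $\frac{\phi(p-1)}{p-1}x$ plus errors involving the character sums $\sum_{n\leq x}\chi(n)$ over non-trivial $\chi$ of squarefree order dividing $R$. Since $k \leq d$, only $O_d(1)$ distinct orders $e$ appear in the expansion, and the characters of a given order $e$ are weighted by $\tfrac{1}{\phi(e)}$, so the full error is at most $O_d(\tfrac{\phi(p-1)}{p-1})$ times the maximum individual character sum.

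The heart of the argument is to show, under Conjecture~\ref{ehj}, that for every non-trivial Dirichlet character $\chi \pmod{p}$ of squarefree order dividing $R$,
\[
\Bigl|\sum_{n \leq x}\chi(n)\Bigr| = o_{\eps}(x) \qquad (x \geq p^{\eps}).
\]
I would aim to obtain this by rerunning the argument sketched for Theorem~\ref{eh1}: starting from a hypothetical large value of this character sum, use the multiplicativity and the periodicity $\chi(n+p) = \chi(n)$ of $\chi$ to convert it into a large correlation sum of the form \eqref{abgd}, which EH then forbids. None of those ingredients is specific to quadratic characters, so they should apply equally to any non-trivial $\chi$ of squarefree order. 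Once this is known, the error in the expansion is $o_d(\tfrac{\phi(p-1)}{p-1} x)$ and the main term dominates: $\sum_{n \leq x}\mathbf{1}_{\mathrm{prim}}(n) \gg_d \tfrac{\phi(p-1)}{p-1}x > 0$ for $x = C_d p^{\eps}$ with $p$ large, producing a primitive root in $[1, O_d(p^{\eps})]$.

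The principal obstacle is verifying this character-sum bound in the stated generality. In Theorem~\ref{eh1} the hypothesis $n(p) > p^{\eps}$ forces $\chi(q) = 1$ for every prime $q \leq p^{\eps}$, giving a clean divisor-sum representation of $\chi$ on smooth numbers that feeds directly into the EH step; for a character $\chi$ of order $\ell > 2$ the analogous starting point---that $|\sum_{n \leq x}\chi(n)| \gg x$ at scale $x = p^{\eps}$---controls $\chi$ only ``on average'' on small primes via Hal\'asz-type pretentiousness estimates, and one must check that this weaker information still produces a divisor-sum structure usable against EH, with constants uniform in the order of $\chi$ up to a $d$-dependent loss. The hypothesis $\omega(p-1) \leq d$ is precisely what keeps the number of such bounds finite (in fact $O_d(1)$), which is why the theorem is conditional on this assumption.
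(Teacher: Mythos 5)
Your reduction has a genuine gap at exactly the point you flag as ``the principal obstacle,'' and that obstacle is not a technicality: the bound $\left|\sum_{n \leq x}\chi(n)\right| = o(x)$ for $x \geq p^{\eps}$ and \emph{every} non-trivial character $\chi$ modulo $p$ is not something the Elliott--Halberstam argument of this paper delivers, and it is not known to follow from EH. The Section~\ref{eh-sec} argument does not bound character sums at all; it proves Conjecture~\ref{vinconj-2}, i.e.\ that the least $n$ with $\chi(n) \neq 1$ is $O(q^{\eps})$. Its starting hypothesis is that $\chi(n)=1$ for \emph{all} $n \leq q^{\kappa}$, which is what makes $f = \chi * \mu$ supported on $q^{\kappa}$-rough numbers and yields the divisor-sum expansion \eqref{foo} that EH can handle. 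Replacing this by the hypothesis $\left|\sum_{n\leq x}\chi(n)\right| \gg x$, which via Hal\'asz only controls $\chi$ on small primes on average, destroys that structure; you acknowledge this but offer no repair, and the paper's own remarks indicate that character-sum cancellation down to scale $q^{\eps}$ is only reachable by combining the full Type II conjecture (all $\varpi < 1/4$) with Burgess --- and even then the resulting primitive-root statement needs $\omega(p-1) = O(\log\log p)$ and gives a weaker exponent. So the inclusion--exclusion identity you write down is fine, but the input it requires is strictly stronger than what Theorem~\ref{eh1} or its proof provides.

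The paper closes this gap by a purely combinatorial device rather than by bounding character sums. If every $n < p^{\kappa}$ fails to be a primitive root, then under the discrete logarithm the set $A$ of logarithms of $[1, p^{\kappa/k}]$ satisfies $kA \subset \bigcup_{\ell \mid p-1} \{x \in \Z/(p-1)\Z : ((p-1)/\ell) x = 0\}$, a union of at most $d$ proper subgroups (products of $k$ numbers below $p^{\kappa/k}$ are again non-primitive-roots). Proposition~\ref{for} (``escape from cosets''), proved by an induction on a lexicographic complexity of the covering, shows that for $k$ large in terms of $d$ this forces $A$ into a single proper subgroup --- that is, some non-trivial $\chi$ modulo $p$ satisfies $\chi(n)=1$ for all $n < p^{\kappa/k}$, which is precisely the situation Conjecture~\ref{vinconj-2} (hence EH) forbids. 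Note also that the hypothesis that $p-1$ has at most $d$ prime factors is used in the paper to make this combinatorial escape argument work (bounding the number of cosets and the ``dimension'' of the group), not merely to keep the number of character orders finite as in your accounting. To fix your write-up you would either need to prove the character-sum bound you assume (a substantially harder and currently open problem under EH alone) or replace that step with an argument of the escape-from-cosets type.
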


We prove this theorem in Section \ref{primsec}.

Our proof of Theorem \ref{eh1} does not easily allow one to convert partial progress on the Elliott-Halberstam conjecture to partial progress on Vinogradov's conjecture.  We now present a different argument that replaces the Elliott-Halberstam conjecture by a conjecture on ``Type II sums'' of the type introduced\footnote{Zhang also considered ``Type I'' and ``Type III'' sums, which will not be of direct relevance in this paper, although the $\tau_3$ distribution estimates mentioned in Section \ref{variant-sec} are related to the Type III sums of Zhang.  Similar sums had also been previously considered by Bombieri, Fouvry, Friedlander, and Iwaniec \cite{bfi, bfi-2, bfi-3, fouvry, ft, fi, fi-2, fik-3}.} by Zhang \cite{zhang}, with the feature that partial progress on the Type II conjecture implies partial progress on Vinogradov's conjecture.  In particular, the Type II estimates in \cite{polymath8a} can be used to improve slightly upon the Vinogradov bound \eqref{vino} by a method different than the Burgess argument, although the numerical exponent obtained is inferior to that in \cite{burgess}.

Let us first state the Type II conjecture, in a formulation suited for the current application.

\begin{conjecture}[Type II conjecture]\label{geh-conj} Let $0 < \varpi < 1/4$, and let $\delta > 0$ be a sufficiently small fixed quantity depending on $\vartheta$.  Let $x$ be an asymptotic parameter going to infinity.  Let $P$ be any number which is the product of some subset of the primes in $[1,x^\delta]$; equivalently, let $P$ be a square-free number all of whose prime factors are at most $x^\delta$. Let $N,M$ be quantities such that
$$x^{1/2-2\varpi} \ll N \ll M \ll x^{1/2+2\varpi}$$
with $NM \sim x$, and let $\alpha, \beta \colon \N \to \R$ be sequences supported on $[M,2M]$ and $[N,2N]$ respectively, such that one has the pointwise bounds
\begin{equation}\label{ab-div}
 |\alpha(n)| \ll 1
\end{equation}
for all natural numbers $n$.  We also assume that $\beta$ is simply the indicator function
$$ \beta = 1_{[N,2N]}.$$
Then one has
\begin{equation}\label{qq-gen}
\sup_{1 \leq a \leq x: (a,P)=1} \sum_{r \ll x^{1/2+2\varpi}: r|P} |\Delta(\alpha \star \beta; a\ (r))| \ll x \log^{-A} x
\end{equation} 
for any fixed $A > 0$.
\end{conjecture}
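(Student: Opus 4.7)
Conjecture~\ref{geh-conj} is formulated as a hypothesis rather than a theorem because the moduli $r\ll x^{1/2+2\varpi}$ range past the Bombieri--Vinogradov barrier at $x^{1/2}$; any proof must therefore exploit both the bilinear (``Type II'') structure of $\alpha\star\beta$ and the $x^\delta$-smoothness of $r\mid P$. The plan is to follow the dispersion method of Linnik, as refined by Zhang~\cite{zhang} and the Polymath project~\cite{polymath8a}. As a first reduction, I would fix the worst residue class $a$ coprime to $P$ realizing the supremum and remove the absolute values over $r$ by inserting unimodular weights $c_r$ with $|c_r|\le 1$; it then suffices to show
$$ \sum_{r\ll x^{1/2+2\varpi},\,r\mid P} c_r\,\Delta(\alpha\star\beta;\,a\,(r)) \ll x\log^{-A}x.$$

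Next, using the fact that $\beta=\mathbf{1}_{[N,2N]}$, I would write $(\alpha\star\beta)(n)=\sum_{m\in[M,2M]}\alpha(m)\mathbf{1}_{m\mid n}\mathbf{1}_{n/m\in[N,2N]}$, pull the $m$-sum outside, and apply Cauchy--Schwarz in $m$ against the pointwise bound $|\alpha(m)|\ll 1$. This Linnik dispersion step eliminates the unknown sequence $\alpha$ and reduces matters to counting, for pairs $(m_1,m_2)\in[M,2M]^2$ and moduli $r_1,r_2\mid P$, the number of $n$ in a fixed dyadic interval with $n\equiv a\pmod{r_im_i}$ for $i=1,2$, minus the corresponding main term. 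The diagonal contribution $(m_1,r_1)=(m_2,r_2)$ is acceptable by trivial counting; the off-diagonal part requires genuine cancellation.

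The main obstacle is precisely this off-diagonal estimate. Here I would exploit the smoothness of $P$ through Zhang's factorization trick: every $r\mid P$ can be written as $r=q_1q_2$ with $q_1,q_2$ lying in essentially arbitrary dyadic ranges. Applying Poisson summation modulo $q_1$ completes the inner $n$-sum, and a $q$-van der Corput move in $q_2$ generates additional averaging; the resulting exponential sums are (normalized) hyper-Kloosterman sums whose square-root cancellation follows from Deligne's form of the Weil conjectures, and it is exactly the interplay between this square-root saving and the free factorization of $r$ that lets one push the admissible level above $\varpi=0$. In the full generality stated --- $\varpi$ anywhere in $(0,1/4)$, with the supremum over all residues coprime to $P$ --- Conjecture~\ref{geh-conj} is well beyond current technology, which is why it is formulated as a conjecture; but for sufficiently small $\varpi$ the strategy above is carried out unconditionally in \cite{polymath8a}, and by the results of Section~\ref{variant-sec} any numerical improvement on those Type~II bounds would translate directly into progress toward the Vinogradov conjecture.
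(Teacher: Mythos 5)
This statement is a conjecture, not a theorem: the paper offers no proof of it, only the remarks that it follows from the generalised Elliott--Halberstam conjecture of \cite{polymath8b} and that Zhang \cite{zhang} and Polymath \cite{polymath8a} establish it unconditionally for $0<\varpi<\tfrac{1}{1168}$ and $0<\varpi<\tfrac{1}{68}$ respectively. Your proposal correctly recognises this, and your sketch of the dispersion/Cauchy--Schwarz reduction, Zhang's factorization of the smooth moduli $r\mid P$, and the completion to Kloosterman-type sums bounded via Deligne's work is an accurate account of how those partial results are obtained, consistent with everything the paper says about the status of the conjecture.
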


This conjecture is implied by the generalised Elliott-Halberstam conjecture in \cite{polymath8b}, which was in turn inspired by a similar conjecture in \cite{bfi}.  In \cite{motohashi} (see also \cite{gallagher}), a generalisation of the Bombieri-Vinogradov theorem is obtained which roughly speaking implies (up to logarithmic factors) the $\varpi=0$ endpoint of this conjecture.  The arguments in \cite{zhang} implicitly establish the above conjecture for $0 < \varpi < \frac{1}{1168}$, and more explicitly the estimate in \cite[Theorem 5.1(iv)]{polymath8a} establishes the conjecture for $0 < \varpi < \frac{1}{68}$.  The estimates in those papers allow for more general values of $a,r$ and more general sequences $\alpha$, $\beta$ than those considered here; however, the restricted version of Conjecture \ref{geh-conj} stated above will suffice for our application.  It is likely that the additional restrictions imposed here (particularly the requirement that $\beta$ be the indicator function of an interval) allow for some improvement in the exponent $\frac{1}{68}$ obtained in \cite{polymath8a}; see also Section \ref{variant-sec} below for a slightly different way to improve upon this exponent, from $\frac{1}{68}$ to $\frac{1}{28}$.

Our next main result is then 

\begin{theorem}[Type II sums bound character sums]\label{eh3}  Suppose that Conjecture \ref{geh-conj} holds for a fixed choice of $0 < \varpi < \frac{1}{4}$.  Then one has
\begin{equation}\label{slit}
\left|\sum_{n < q^{1/2 - 2 \varpi + \eps}} \chi(n)\right| \ll q^{1/2 - 2 \varpi + \eps} \log^{-A} q
\end{equation}
for any sufficiently small fixed $\eps > 0$, any fixed $A>0$, and any natural number $q$ (not necessarily prime), whenever $\chi$ is a non-principal primitive Dirichlet character of conductor $q$.
\end{theorem}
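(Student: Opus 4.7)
The plan is to argue by contradiction, mirroring the blueprint of Theorem~\ref{eh1} but with the linear Elliott--Halberstam input replaced by the bilinear Type II input of Conjecture~\ref{geh-conj}. Suppose for contradiction that $\bigl|\sum_{n<y}\chi(n)\bigr| \geq y \log^{-A} q$ for $y = q^{1/2 - 2\varpi + \eps}$ and some large fixed $A$. I would select the Type II scale $x$ so that $N := y$ sits at the low end of the admissible window $[x^{1/2-2\varpi}, x^{1/2+2\varpi}]$ (so $x$ is a fixed power of $q$, slightly larger than $q$); set $M = x/N$; and let $P$ be the product of primes below $x^\delta$ for a sufficiently small fixed $\delta > 0$. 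The main case is $q \mid P$, i.e.\ $q$ is $x^\delta$-smooth; a general $q$ can be reduced to this case by factoring off the rough part of the conductor and falling back to P\'olya--Vinogradov when the conductor is too large to be smoothed, a reduction I would postpone to the end.

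Next, I would pick $\beta = 1_{[N,2N]}$ and a bounded coefficient sequence $\alpha$ supported on $[M,2M] \cap \{m : (m,q)=1\}$, chosen so that $\bigl|\sum_m \alpha(m)\chi(m)\bigr| \asymp M$---for instance $\alpha(m) = \overline{\chi(m)}\,\mathbf{1}_{(m,q)=1}$. Form the test sum
\[ \Sigma := \sum_n (\alpha\star\beta)(n)\,\chi(n). \]
On the one hand, by complete multiplicativity of $\chi$,
\[ \Sigma = \Bigl(\sum_m\alpha(m)\chi(m)\Bigr)\Bigl(\sum_{N\leq n\leq 2N}\chi(n)\Bigr), \]
so the choice of $\alpha$ and the contradictory hypothesis force $|\Sigma| \gg M y \log^{-A} q = x \log^{-A} q$. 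On the other hand, periodicity of $\chi$ modulo $q$ together with the non-principality identity $\sum_{a \in (\Z/q\Z)^\times} \chi(a) = 0$ yield
\[ \Sigma = \sum_{a\in (\Z/q\Z)^\times} \chi(a)\,\Delta(\alpha\star\beta;\, a\ (q)). \]

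The heart of the proof, and its main obstacle, is to show that the second expression is $o(x \log^{-A} q)$ using only Conjecture~\ref{geh-conj}. The crude estimate $|\Sigma| \leq \phi(q)\sup_a|\Delta|$ loses essentially a full factor of $q$, and Cauchy--Schwarz against $\sum_a|\Delta|^2 \leq 2x \sup_a|\Delta|$ still leaves a deficit of $\sqrt{q}$. To close the remaining gap, I would square $\Sigma$, open the square to produce a shifted bilinear sum of the form $\sum_h \sum_n (\alpha\star\beta)(n)\,\overline{(\alpha\star\beta)(n+h)}$, and exploit the periodicity of $\chi$ to introduce the nontrivial shift $h$ against which Conjecture~\ref{geh-conj} can be applied a second time, now to a modified convolution $\alpha'\star\beta'$ with $\beta'$ still the indicator of an interval (obtained by repackaging the doubled convolution so that one factor is $1_{[N',2N']}$). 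Combined with a diagonal contribution controlled by H\"older, the two applications of Type II recover the missing $\sqrt{\phi(q)}$ saving.

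Comparing the multiplicative lower bound $|\Sigma|^2 \gg x^2 \log^{-2A} q$ with the iterated upper bound $|\Sigma|^2 \ll x^2 \log^{-A'} x$ produces the required contradiction as soon as $A' > 2A$, which can be arranged by invoking Conjecture~\ref{geh-conj} with $A'$ large enough. The most delicate part of the execution is precisely the squaring step: one must verify that after introducing the shift $h$, the doubled convolution can be put in the precise form $\alpha'\star 1_{[N',2N']}$ required by Conjecture~\ref{geh-conj}, with $N'$ in the admissible window $[x^{1/2-2\varpi}, x^{1/2+2\varpi}]$. This may require a further splitting of the $h$-sum according to $\gcd(h,q)$ together with additional uses of periodicity of $\chi$, and is where I would expect the bulk of the technical work to lie.
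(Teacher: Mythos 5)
Your overall strategy is the right one and matches the paper's: use multiplicativity to turn the large short character sum into a large correlation between $\chi$ and a convolution $\alpha\star\beta$, use periodicity modulo $q$ plus Cauchy--Schwarz (dispersion) to eliminate $\chi$ and reduce to shifted autocorrelations of $\alpha\star\beta$, and evaluate those with the Type II input. But there are two concrete gaps. The first concerns where the smoothness in Conjecture \ref{geh-conj} actually gets used. The moduli $r$ to which the Type II estimate is applied are \emph{not} related to $q$ at all: after expanding one copy of the convolution in the correlation $\sum_n (\alpha\star\beta)(n)\,\overline{(\alpha\star\beta)(n+hq)}$, the congruence condition is $n \equiv -hq \ \ (r)$ where $r$ runs over the \emph{support of $\alpha$}. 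For Conjecture \ref{geh-conj} to apply one needs $r\mid P$, i.e.\ $r$ squarefree and $x^\delta$-smooth, and one needs the residue $-hq$ coprime to $P$. Hence $\alpha$ must be supported on squarefree numbers all of whose prime factors lie in a range like $[q^\eps, x^\delta]$ and avoid the primes dividing $q$ (the paper takes $\alpha = 1_{\mathcal D}\overline{\chi}$ for such a set $\mathcal D$, which still has positive logarithmic density up to $\log$ factors, enough for the lower bound on $\Sigma$). Your choice $\alpha(m)=\overline{\chi(m)}1_{(m,q)=1}$ on all of $[M,2M]$ produces moduli $r$ with large prime factors, to which the hypothesis simply does not apply. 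Correspondingly, your ``main case $q\mid P$'' and the proposed reduction of general $q$ to smooth $q$ are misdirected: no smoothness of the conductor is needed anywhere (it enters only through the shift $hq$ and the residue class $hq\ (r)$), and the fallback to P\'olya--Vinogradov could not work in any case since the scale $q^{1/2-2\varpi+\eps}$ is below $q^{1/2}$.

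The second gap is in the ``squaring step.'' Opening $|\Sigma|^2$ directly leaves the factor $\chi(n)\overline{\chi(n')}$ in place; periodicity alone does not remove it. The character is eliminated by first producing a \emph{family} of correlated sums before Cauchy--Schwarz: since $\alpha\star\beta$ is supported in $[x/4,x]$ with $x\ge q^{1+2\eps}$, periodicity gives $\sum_{n}\chi(n)\,\alpha\star\beta(n+jq) = \Sigma + o(\cdot)$ for every $1\le j\le q^\eps$, so $\bigl|\sum_n \chi(n)\sum_{j\le q^\eps}(\alpha\star\beta)(n+jq)\bigr|\gg xq^\eps\log^{-O(A)}x$; Cauchy--Schwarz in $n$ (with $|\chi|\le 1$) then yields $\sum_n\bigl|\sum_j(\alpha\star\beta-\gamma 1_{[x/2,x]})(n+jq)\bigr|^2 \gg xq^{2\eps}\log^{-O(A)}x$, the diagonal is $O(q^\eps x^{1+o(1)})$ and hence negligible precisely because the number of shifts $q^\eps$ is a fixed positive power of $x$, and the surviving off-diagonal terms are correlations at shifts $(j-j')q$ --- automatically nonzero multiples of $q$, which is what makes the residue classes $jq\ (r)$ admissible. (Your alternative of dispersing over residue classes $a\ (q)$ would also produce shifts that are multiples of $q$, but you must subtract the mean $\gamma 1_{[x/2,x]}$ and track the main terms, and there is only \emph{one} application of the Type II hypothesis at the end, not two.) Without the $j$-averaging, the Cauchy--Schwarz loss is a full factor of $x^{1/2}$ and no contradiction is reached.
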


By the usual argument of Vinogradov this gives

\begin{corollary}\label{eh3-cor} Suppose that Conjecture \ref{geh-conj} holds for a fixed choice of $0 < \varpi < \frac{1}{4}$.  Then one has
$$
n(q) \ll q^{\frac{1}{\sqrt{e}} (\frac{1}{2} - 2 \varpi) + \eps} $$
for any fixed $\eps > 0$ and any prime $q$.
\end{corollary}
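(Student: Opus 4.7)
The plan is to run the classical Vinogradov argument that converts cancellation in short character sums into a bound on the least quadratic nonresidue, using Theorem \ref{eh3} as the character-sum input. Let $q$ be prime, let $\chi$ denote the Legendre symbol modulo $q$ (a non-principal primitive Dirichlet character of conductor $q$), and set $X \coloneqq q^{1/2 - 2\varpi + \eps'}$ where $\eps' > 0$ is a small parameter to be chosen in terms of $\eps$ at the end. By Theorem \ref{eh3}, one has
\begin{equation*}
\Bigl|\sum_{n < X} \chi(n)\Bigr| \ll X \log^{-A} q
\end{equation*}
for any fixed $A > 0$, provided $\eps'$ is sufficiently small.

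Next I would set $y \coloneqq n(q)$ and observe that every prime $p < y$ is a quadratic residue modulo $q$, so $\chi(n) = 1$ for every $y$-smooth positive integer $n$. Writing $\Psi(X,y)$ for the number of $y$-smooth integers in $[1,X]$ and bounding $|\chi(n)| \leq 1$ on the $y$-rough integers, I get the elementary lower bound
\begin{equation*}
\Bigl|\sum_{n \leq X} \chi(n)\Bigr| \;\geq\; \Psi(X,y) - (X - \Psi(X,y)) \;=\; 2\Psi(X,y) - X.
\end{equation*}

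The main input now is the standard smooth-number asymptotic (de Bruijn--Hildebrand): in the range $u = \log X / \log y = O(1)$, one has $\Psi(X,y) = \rho(u) X (1+o(1))$, where $\rho$ is the Dickman function. Combining this with the character-sum bound yields $2\rho(u) - 1 = o(1)$, so $\rho(u) \leq 1/2 + o(1)$. Since $\rho$ is continuous and strictly decreasing on $[1,2]$ with $\rho(\sqrt{e}) = 1 - \log \sqrt{e} = 1/2$, this forces $u \geq \sqrt{e} - o(1)$, i.e. $y \leq X^{1/\sqrt{e} + o(1)}$. Substituting $X = q^{1/2 - 2\varpi + \eps'}$ gives $n(q) \ll q^{(1/\sqrt{e})(1/2 - 2\varpi) + \eps'/\sqrt{e} + o(1)}$, and choosing $\eps'$ small enough that $\eps'/\sqrt{e} < \eps$ completes the proof.

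The only step requiring care is the smooth-number input: one has to invoke the Dickman--de Bruijn asymptotic in a range where it is valid. Since $u = (1/2 - 2\varpi + \eps')/((1/\sqrt{e})(1/2 - 2\varpi) + \eps/2)$ is bounded (in fact very close to $\sqrt{e}$, hence in $[1,2]$), this is a classical regime and the asymptotic applies directly. No other obstacle appears; the argument is a direct transcription of Vinogradov's original derivation of \eqref{vino} from his $O(p^{1/(2\sqrt{e})} \log^2 p)$ character-sum bound, with the new character-sum length $q^{1/2 - 2\varpi + \eps}$ supplied by Theorem \ref{eh3}.
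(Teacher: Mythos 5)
Your proof is correct, and it is the same ``Vinogradov trick'' in spirit, but implemented with a different key input than the paper uses. The paper starts from the pointwise inequality $\chi(n) \geq 1 - 2\sum_{p \mid n,\, p > n(q)} 1$, sums it, and bounds the resulting error by the first-moment estimate $\sum_{n(q) < p \leq x}(x/p+1)$ together with Mertens' theorem, arriving at $\log\frac{\log x}{\log n(q)} \geq \frac12 + o(1)$; this is completely elementary and self-contained. You instead lower-bound the character sum by $2\Psi(X,y)-X$ and invoke the Dickman--de Bruijn asymptotic $\Psi(X,y) = \rho(u)X(1+o(1))$, concluding from $\rho(\sqrt e)=1/2$. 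The two are numerically equivalent in the relevant range $1 \leq u \leq 2$, since there $\rho(u) = 1-\log u$ is exactly what the union bound plus Mertens produces; your route imports a (standard but heavier) theorem, while the paper's avoids it. Two small points to tighten: (i) the appeal to the Dickman asymptotic presupposes $u = O(1)$ and $y\to\infty$, which is most cleanly arranged by arguing by contradiction (assume $n(q) > q^{(1/\sqrt e)(1/2-2\varpi)+\eps}$ for a sequence of primes $q\to\infty$, so that $u$ is bounded away from $2$ and $y\to\infty$); as written, your verification that $u$ lies in the classical regime quietly uses the conclusion you are trying to prove. (ii) Since $n(q)$ is itself prime, ``$y$-smooth with $y=n(q)$'' should be taken with strict inequality on the prime factors (or replace $y$ by $n(q)-1$); this is harmless but worth stating. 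Neither point is a genuine gap.
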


\begin{proof}  From the pointwise estimate
$$ \chi(n) \geq 1 - 2 \sum_{p|n: p > n(q)} 1 $$
for the quadratic character $\chi(n) \coloneqq \left( \frac{n}{q} \right)$
we see that
$$ \sum_{n < x} \chi(n) \geq x - 1 - 2 \sum_{n(q) < p \leq x} \left(\frac{x}{p} + 1\right)$$
for any $x > 1$.  Setting $x := q^{1/2-2\varpi+\eps}$ for some $\eps>0$ and using Theorem \ref{eh3}, we see that
$$ x - 2 x \sum_{n(q) < p \leq x} \frac{1}{p} \leq o(x)$$
as $q \to \infty$.  From Mertens' theorem, this implies that
$$ \log \frac{\log x}{\log n(q)} \geq \frac{1}{2} + o(1),$$
and the claim follows.
\end{proof}

In particular, the Type II estimates in \cite{polymath8a} give the improvement 
$$
n(p) \ll p^{\frac{1}{\sqrt{e}} ( \frac{1}{2} - \frac{1}{34} ) + \eps } 
$$
to \eqref{vino} for any fixed $\eps > 0$.  This is well short of the improvement in \eqref{burgess}, however it represents a slightly different way to break the ``square root barrier'' than the Burgess argument; for instance, the arguments can extend to general moduli than primes $p$ without much difficulty, whereas the Burgess argument encounters some additional technical issues when the modulus is not cube-free.  One will be able to surpass the Burgess bound as soon as one can establish a Type II estimate for some $\varpi > \frac{1}{8}$ (or $\varpi > \frac{1}{12}$ in the non-cube-free case), thus one needs to improve the Type II exponents in \cite{polymath8a} by a factor of roughly eight.  Interestingly, it was noted in \cite{bfi} (see Conjecture 3 of that paper) that if one assumed square root cancellation in certain exponential sums, one could obtain Type II estimates for all $\varpi < \frac{1}{8}$, thus falling barely short of being able to improve upon the Burgess bound.

Theorem \ref{eh3}, when combined with the Type II estimates in \cite{polymath8a}, establishes the short character sum bounds
\begin{equation}\label{qeea}
\sum_{n < q^{\frac{1}{2} - \frac{1}{34} + \eps}} \chi(n) = q^{\frac{1}{2} - \frac{1}{34} + \eps} \log^{-A} q
\end{equation}
for any primitive character $\chi$ of conductor $q$.  This bound is inferior to that of Burgess \cite{burgess, burgess-2, burgess-3}, which establishes
$$
\sum_{M \leq n \leq M+N} \chi(n) = N^{1-\delta(\eps)}
$$
for arbitrary $M$ when $N \gg q^{1/3 + \eps}$ (if $q$ is not cube-free) or $N \gg q^{1/4+\eps}$ (if $q$ is cube-free), and $\delta(\eps)>0$ depends only on $\eps$.  With our methods, one would need Type II estimates at level of distribution at least $2/3$ (thus $\varpi > 1/12$) to improve upon the Burgess bound in the non-cube-free setting, or at least $3/4$ (thus $\varpi > 1/8$) in the cube-free setting.  Note also the Burgess bound has also been improved for certain types of modulus $q$, such as smooth numbers (see e.g. \cite{graham}, \cite{gold}) or prime powers (see e.g. \cite{postnikov}).

\begin{remark}
If one had the Type II estimates for all $0 < \varpi < 1/4$, then (by combining Corollary \ref{eh3-cor} with the Burgess bound) we would have
$$ \sum_{n \leq x} \chi(n) \ll x \log^{-A} x $$
for all $x \geq q^\eps$ and fixed $A,\eps>0$, and hence (by summation by parts) one would obtain a very slight improvement $L(1,\chi) = o(\log q)$ to the standard upper bound $L(1,\chi) = O(\log q)$ for the sum $L(1,\chi) = \sum_n \frac{\chi(n)}{n}$.  Furthermore, one obtains the bound $L(s,\chi) = O(\log^2 q)$ (say) when $|s-1| \leq \frac{A \log\log q}{\log q}$ for any fixed $A$.  Using this and standard arguments (see e.g. \cite[Chapter 8]{ik}), one can enlarge\footnote{We thank James Maynard for this remark.} the classical zero-free region of $L(s,\chi)$ to include the region $|s-1| \leq \frac{A}{\log q}$ for any fixed $A>0$, except possibly for a Siegel zero.  This in turn can be used to improve the prime number theorem of Gallagher \cite{gallagher-large}, and hence also the constant in Linnik's theorem on primes in an arithmetic progression, assuming the Type II estimates, and possibly excluding an exceptional modulus; we omit the details.
\end{remark}

\begin{remark} By standard arguments (see e.g. \cite[Corollary 9.20]{mv}) starting from the observation that the sum
$$ \sum_{d|Q} \frac{\phi(Q/d) \mu(d)}{Q} \sum_{\substack{\chi\ (Q) \\ \operatorname{ord}(\chi)=d}} \sum_{n \leq x} \chi(n)$$
counts the number of primitive roots modulo a prime $p$ up to $x$, where $Q$ is the product of all the primes dividing $p-1$, we see that Theorem \ref{eh3} implies that if one has Type II estimates for a given $0 < \varpi < 1/4$, then the least primitive root of $\Z/p\Z$ is $O( p^{1/2 - 2\varpi + \eps} )$ for any fixed $\eps$ and any prime $p$, provided that $p-1$ has at most $O( \log\log p )$ prime factors; we leave the details to the interested reader.   In particular, we can strengthen the conclusion of Theorem \ref{eh-a} slightly if we replace the Elliott-Halberstam conjecture by the Type II conjecture for $\varpi$ arbitrarily close to $1/4$.  It may be possible\footnote{We thank the anonymous referee for this suggestion.} to remove the requirement on the number of prime factors of $p-1$, by using zero-density estimates (together with a result of Rodosskii \cite{rod} linking $L$-function zeroes with character sums; see also the recent preprints \cite{banks}, \cite{gs-new}) to show that $\sum_{n \leq x} \chi(n)$ is small for most characters $\chi$; we will not pursue this in detail here.  
\end{remark}

\begin{remark} Suppose Conjecture \ref{geh-conj} holds for some fixed $0 < \varpi < 1/4$, and suppose that $q$ is a large prime such that the least prime quadratic \emph{residue} is at least\footnote{We thank John Friedlander for suggesting this problem.} $q^{1/2-2\varpi+\eps}$.  Then, letting $\chi$ be the quadratic character of conductor $q$, one has $\chi(n) = \lambda(n)$ for all $n \leq q^{1/2-2\varpi+\eps}$, where $\lambda$ is the Liouville function.  From the prime number theorem (for $n \leq q^{1/2-2\varpi+\eps}$) and Theorem \ref{eh3}, we conclude that $\sum_n \frac{\chi(n)}{n} \ll \log^{-A} q$ and $\sum_n \frac{\chi(n) \log n}{n} \gg 1$, so that $\left| \frac{L'(1,\chi)}{L(1,\chi)}\right| \gg \log^A q$ for any fixed $A$.  From standard arguments this implies that one has a Siegel zero $L(\sigma,\chi)=0$ with $1-\sigma \ll \log^{-A} q$ for any fixed $A$.  Thus, if one could rule out Siegel zeroes, one could use Type II estimates to bound the least prime quadratic residue.  If one could improve the $\log^{-A} q$ gain in \eqref{slit} to a power saving $q^{-\eps}$, then Siegel's theorem could be used to remove the need to consider Siegel zeroes; for instance this argument recovers the standard bound of $q^{1/4+o(1)}$ for the least prime quadratic residue coming from the Burgess bound.  However, our arguments would require a similar power saving in the Type II estimates to achieve this, which may be an overly ambitious hypothesis.
\end{remark}

We prove Theorem \ref{eh3} in Section \ref{geh-sec}.  The idea here is to exploit the fact that if $\sum_{n \in [N/2,N]} \chi(n)$ is large, then on an interval $[1, x]$ with $x = q^{1+O(\eps)}$, $\chi(n)$ will exhibit large correlation with $\alpha * \beta(n+jq)$ for any $j = O( q^\eps)$, where $\beta \coloneqq 1_{[N/2,N]}$ and $\alpha$ is the restriction of $\chi$ to smooth squarefree numbers of magnitude close to $x/N$ and which are coprime to $q$.  This is because of the multiplicativity and periodicity properties of $\chi$.  An application of Cauchy-Schwarz (i.e. the dispersion method) then shows that $\alpha * \beta(n+jq)$ and $\alpha * \beta(n + j'q)$ correlate with each other for some distinct $j,j'$, but one can use Type II estimates to exclude this scenario from occurring.

\begin{remark} The above argument shares many similarities with the argument of Burgess \cite{burgess}.  Both arguments rely heavily on the periodicity and multiplicativity of the Dirichlet character $\chi$, which allows one to start with a hypothesis that a single character sum $\sum_{n \leq x} \chi(n)$ is large, and deduce that $\chi$ is biased on many arithmetic progressions.  In the current argument, one exploits the bias of $\chi$ on medium-length arithmetic progressions (of length about $q^{1/2 - 2\varpi}$) and varying modulus; in contrast, the argument of Burgess exploits the bias of $\chi$ on many (close to $q^{1/2}$) very short progressions (of length $q^\eps$ for some small $\eps$) and fixed modulus.  Unfortunately, the author was not able to combine the two methods together to obtain any improvement on \eqref{burgess}, without assuming a large portion of the Elliott-Halberstam or Type II conjectures.
\end{remark}

\begin{remark}  The proof of Theorem \ref{eh3} may possibly extend to cover the shifted character sums $\sum_{M \leq n \leq M+N} \chi(n)$ appearing in the work of Burgess; however, the way the argument is currently presented, this would require a shifted version of a Type II estimate in which the convolution $\alpha * \beta$ is replaced by a shifted convolution.  As such, one can no longer directly quote the results from \cite{polymath8a} to obtain a result for such shifted sums; however it is plausible that some modification of the \emph{proof} of the Type II estimate in \cite{polymath8a} can still be adapted to this shifted setting.  We do not pursue this matter here (as with the centred sums, the we do not seem to directly improve upon the Burgess bounds at the current level of technology for equidistribution estimates).
\end{remark}

A variant of the argument used to prove of Theorem \ref{eh3}, which we discuss in Section \ref{variant-sec} below, allows one to use distributional estimates for the higher divisor functions
\begin{equation}\label{div}
\tau_k(n) := \sum_{n_1,\dots,n_k: n_1 \dots n_k = n} 1
\end{equation}
(or more precisely, from dyadic components of such functions) in place of Type II estimates to obtain similar results.  Roughly speaking, a distributional estimate on $\tau_k$ at level $\theta$ implies a bound of the form \eqref{slit} with $\frac{1}{2}-2\varpi$ replaced by $\max( 1-\theta, \frac{1}{k\theta+1} )$; thus for instance the classical distribution estimate of $\tau_2$ at $\theta=\frac{2}{3}$ gives \eqref{slit} with $\varpi = \frac{1}{28}$, slightly improving upon \eqref{qeea}, though still short of the Burgess bounds in both cube-free and non-cubefree cases.  More recently, a level of distribution $4/7$ has been established (in a restricted averaged sense) for $\tau_3$ in \cite{FKI}, which (morally at least) also recovers \eqref{slit} with $\varpi = \frac{1}{28}$.  To improve upon the Burgess bound, one would need $\tau_k$ at level of distribution above $2/3$ for some $k \geq 3$ (in the non-cube-free case) or above $3/4$ for some $k \geq 4$ (in the cube-free case).  Both results seem unfortunately to be out of reach of current methods.  

A similar analysis, again discussed in Section \ref{variant-sec} below suggests that one should be able to improve the exponent $\frac{1}{2}-2\varpi$ in \eqref{slit} to $\frac{1}{k}-c$ for some $c>0$ provided that one can obtain good asymptotics for sums such as
$$ \sum_{n \leq x} \tau_k(n) \tau_k(n+q)$$
with $q = o(x)$.  In particular, controlling such sums for $k=3$ would (morally, at least) improve upon the non-cube-free Burgess bound, and for $k=4$ would improve upon the cube-free Burgess bound.  Unfortunately, rigorous asymptotics for these sums have only been established for $k=2$.

\subsection{Notation}\label{notation}

We use the following asymptotic notation.  We allow for an asymptotic parameter (e.g. $x$ or $q$) to go to infinity; quantities in this paper may depend on this parameter unless they are explicitly labeled as \emph{fixed}.  We then write $X \ll Y$, $X = O(Y)$, or $Y \gg X$ if one has $|X| \leq CY$ for some fixed $C$ (in particular, $C$ can depend on other parameters as long as they are also fixed).  We also write $X = o(Y)$ if we have $|X| \leq c Y$ for some quantity $c$ that goes to zero as the asymptotic parameter goes to infinity, and write $X \sim Y$ for $X \ll Y \ll X$.

Sums over $p$ are understood to be over primes, and all other sums are over the natural numbers $\N = \{1,2,3,\dots\}$ unless otherwise indicated.

Given two functions $f, g \colon \N \to \C$, their Dirichlet convolution $f*g$ is defined by
$$ f*g(n) \coloneqq \sum_{d|n} f(d) g(\frac{n}{d}),$$
where $d|n$ denotes the assertion that $d$ divides $n$.

Given two natural numbers $a,b$, we use $(a,b)$ to denote the greatest common divisor of $a,b$, and $a\ (b)$ to denote the residue class of integers equal to $a$ modulo $b$. Given a natural number $r$, we use $(\Z/r\Z)^\times = \{ a\ (r): (a,r)=1\}$ to denote the primitive residue classes modulo $r$.

We use $1_E$ to denote the indicator function of $E$, thus $1_E(n)$ equals $1$ when $n \in E$ and equals zero otherwise.  Similarly, if $S$ is a sentence, we write $1_S$ to equal $1$ when $S$ is true and $0$ otherwise, thus for instance $1_E(n) = 1_{n \in E}$.

\subsection{Acknowledgments}

The author was supported by a Simons Investigator grant, the
James and Carol Collins Chair, the Mathematical Analysis \&
Application Research Fund Endowment, and by NSF grant DMS-1266164.  He also thanks John Friedlander, Andrew Granville, James Maynard, Lillian Pierce, and Felipe Voloch for several useful discussions, and the anonymous referee for many valuable comments and suggestions.

\section{Vinogradov from Elliott-Halberstam}\label{eh-sec}

We now prove Theorem \ref{eh1}.  We will in fact prove a slightly stronger implication, in which Conjecture \ref{vinconj} is replaced by 

\begin{conjecture}\label{vinconj-2}  For any Dirichlet character $\chi$, let $n_\chi$ be the first natural number with $\chi(n_\chi) \neq 1$.  For any fixed $\eps>0$, we have $n_\chi \ll q^\eps$ for any primitive Dirichlet character $\chi$ of prime conductor $q$.
\end{conjecture}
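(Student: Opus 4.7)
The plan is to prove the stronger Conjecture~\ref{vinconj-2} by contradiction. Assume for a fixed $\eps > 0$ and arbitrarily large primes $q$ that $n_\chi > q^\eps$ for some primitive character $\chi$ modulo $q$; equivalently, $\chi(p) = 1$ for every prime $p \le y := q^\eps$. The strategy is to compare two evaluations of the shifted correlation
$$T := \sum_{n \le x} \chi(n)\,\Lambda(n+q)$$
at some scale $x = q^{C(\eps)}$, deriving a large lower bound and an incompatible Elliott--Halberstam upper bound.

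For the \emph{lower bound}, the hypothesis makes $\chi$ strongly pretentious to the trivial character on primes up to $y$: the pretentious distance $\sum_{p \le x}(1-\chi(p))/p$ is bounded uniformly in $q$ throughout $q \le x \le q^C$. Applying Hal\'asz-type theorems for mean values of multiplicative functions, combined with a transfer from smooth sums to prime-weighted sums (for instance via Selberg's identity $\Lambda = -\mu \log + \mu * \log^2$, or a Landau-type contour shift involving $-L'(s,\chi)/L(s,\chi)$), one extracts an anomaly
$$\Bigl|\sum_{n \le x}\chi(n)\,\Lambda(n)\Bigr| \gg x \log^{-B} q$$
for some fixed $B = B(\eps)$ at an appropriate $x \asymp q^C$. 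Periodicity $\chi(n) = \chi(n+q)$ for $(n,q)=1$, applied after the substitution $n \mapsto n - q$, then converts this bound into $|T| \gg x \log^{-B} q$ up to a boundary error $O(q \log q)$ that is subsumed once $x \gg q \log^{B+1} q$.

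For the \emph{upper bound}, write $\chi = 1 * h$ with $h := \chi * \mu$. Complete multiplicativity of $\chi$ together with the hypothesis gives $h(p^k) = \chi(p^k) - \chi(p^{k-1}) = 0$ for every $p \le y$, so $h$ is supported on $y$-rough integers and satisfies $|h| \le \tau$ pointwise. Substituting,
$$T = \sum_{\substack{d\colon y\text{-rough}\\(d,q)=1}} h(d) \sum_{\substack{m \le x+q\\ m \equiv q\ (d)}} \Lambda(m) + O(\log x),$$
and truncating the $d$-sum at $d \le x^\vartheta$ for a $\vartheta < 1$ in EH's range, the tail being controlled by combining $|h| \le \tau$ with a sieve estimate on $y$-rough integers (and a Brun--Titchmarsh bound). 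Conjecture~\ref{ehj} then replaces each inner sum by $(x+q)/\phi(d)$ with total discrepancy $O(x \log^{-A} q)$ for any fixed $A$. The surviving main term $(x+q)\sum_d h(d)/\phi(d)$ factors as the Euler product
$$\prod_{\substack{p > y\\ \chi(p) = -1}} \Bigl(1 - \frac{2p}{p^2 - 1}\Bigr),$$
which, by Mertens-type asymptotics (i.e.\ $\sum_{y < p \le x,\,\chi(p) = -1} 1/p = \tfrac{1}{2}(\log\log x - \log L(1,\chi)) + O(1)$) together with the classical upper bound $L(1,\chi) = O(\log q)$, is at most $O(\log^{-C'} q)$ for some $C' > 0$ that grows with $C$.

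Choosing $C$ (hence $C'$) sufficiently large and then $A > B$, the upper bound $|T| \ll x \log^{-\min(A,C')} q$ becomes strictly smaller than the lower bound $\gg x \log^{-B} q$ for $q$ large, yielding the contradiction. The \emph{principal obstacle} is the first step: extracting a quantitative power-of-$\log$ lower bound on the $\Lambda$-weighted character sum from the pretentious structure of $\chi$ alone, since Hal\'asz's theorem is most naturally phrased as an upper bound. This likely requires either a direct Perron-type contour computation exploiting the rigidity forced by $n_\chi > q^\eps$, or an invocation of Hildebrand-style results for $\{\pm 1\}$-valued multiplicative functions to locate a nontrivial zero of $L(s,\chi)$ close enough to $s = 1$ to drive the anomaly.
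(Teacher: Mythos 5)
Your overall architecture matches the paper's: deduce from $n_\chi > q^\eps$ that $\sum_{n \le x}\chi(n)\Lambda(n)$ is anomalously large at some scale $x$ with $q = o(x)$, shift by $q$ using periodicity, and then contradict this by expanding $\chi = 1 * (\chi*\mu)$ into a divisor sum supported on $q^\eps$-rough moduli and applying Conjecture \ref{ehj}. However, there is a genuine gap, and it is exactly the one you flag as the ``principal obstacle'': you never actually produce the lower bound. Hal\'asz-type theorems give upper bounds on mean values, and the suggestion of locating a zero of $L(s,\chi)$ near $s=1$ is not something the hypothesis $n_\chi > q^\eps$ supplies. The paper fills this hole with a self-contained argument that is the real content of Section \ref{eh-sec}: it passes to logarithmic means $A_q(t), B_q(t)$, extracts Lipschitz limit profiles $A,B$ by Arzel\`a--Ascoli along a subsequence, proves the Wirsing integral equation $t\,a(t)=\int_0^t a(u)b(t-u)\,du$ for the derivatives, and then uses a Laplace-transform/entire-function argument to show that $b$ cannot be compactly supported once $a$ is (which follows from periodicity) and $a=b=1$ near $0$ (which follows from $n_\chi > q^\kappa$). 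This yields fixed $t_1>1<t_2$ and a scale $x\in[q^{t_1},q^{t_2}]$ with $\bigl|\sum_{n\in[x/2,x]}\chi(n)\Lambda(n)\bigr|\gg x$ --- crucially with a fixed implied constant, not merely $\gg x\log^{-B}q$.

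This quantitative point matters for your upper bound as well. Your main term $(x+q)\sum_d h(d)/\phi(d)$, truncated at $d\le x^\vartheta$, is governed by roughly $L(1,\chi)\log y/\log x$; with only the classical bound $L(1,\chi)=O(\log q)$ available this is $O_{\eps}(1/C)$ at best, not $O(\log^{-C'}q)$ --- to get a negative power of $\log q$ you would need $L(1,\chi)=o(\log q)$, which is not known and is in fact close to what one is trying to prove. Consequently a lower bound of the form $x\log^{-B}q$ with $B>0$ cannot beat your main term, and the contradiction does not close. The paper sidesteps this by working with $\Lambda(n+q)-1$ (so the expected main term is killed by $\sum_{n\in[x/2,x]}\chi(n)=o(x)$ together with $\frac{1}{\phi(r)}-\frac1r\ll q^{-\kappa}\frac1r$ on rough moduli), and by arranging the small-divisor contribution to be $\ll \nu x$ with $\nu$ arbitrarily small, against a lower bound $\gg x$. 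If you repair the lower bound to give $\gg x$ (e.g.\ by the Wirsing/compactness route) and restructure the main term as in the paper, your argument goes through; as written, both halves of the claimed contradiction are deficient.
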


Clearly, Conjecture \ref{vinconj} is the special case of Conjecture \ref{vinconj-2} in which $\chi$ is a quadratic character.

Assume the Elliott-Halberstam conjecture.  Suppose for sake of contradiction that Conjecture \ref{vinconj} failed, then we can find a fixed $\kappa > 0$ and a sequence $q$ of primes going to infinity, as well as a character $\chi$ of modulus $q$, such that
$$ n_\chi > q^\kappa.$$
Without loss of generality we may take $\kappa$ to be small, e.g., $\kappa < \frac{1}{2}$.  We view $q$ as an asymptotic parameter for the purposes of asymptotic notation, and reserve the right to refine $q$ to subsequences as necessary.

We will need some basic results from the theory of mean values of multiplicative functions in order to produce some anomalous distribution for $\chi(n) \Lambda(n)$ at large scales.  This could be accomplished using the results of Granville and Soundararajan \cite{gs} (or even the earlier work of Wirsing \cite{wirsing}), but we do not need the full strength of their theory here, since we will be satisfied with an analysis of logarithmic densities such as $\frac{1}{\log x} \sum_{n \leq x} \frac{\chi(n)}{n}$ instead of natural densities such as $\frac{1}{x} \sum_{n \leq x} \chi(n)$.  As such, we give a self-contained treatment here.

It will be technically convenient to work in the asymptotic limit in which we extract the mean value after sending $q$ to infinity (this is a luxury available in the logarithmic density setting that is not easily achievable for natural densities, at least if one is not willing to use the tools of nonstandard analysis).  
For any fixed $t \geq 0$, we consider the logarithmic densities
$$ A_q(t) \coloneqq \frac{1}{\log q} \sum_{n < q^t} \frac{\chi(n)}{n} $$
and
$$ B_q(t) \coloneqq \frac{1}{\log q} \sum_{n < q^t} \frac{\chi(n) \Lambda(n)}{n}.$$
From Mertens' theorem we have the Lipschitz bounds
\begin{equation}\label{qqfg}
 |A_q(t) - A_q(s)|, |B_q(t)-B_q(s)| \leq |t-s| + o(1)
\end{equation}
for all fixed $t,s \geq 0$; also we clearly have $A_q(0)=B_q(0) = 0$.  From the Arzela-Ascoli theorem, and refining $q$ to a subsequence as necessary, we may thus find \emph{fixed} Lipschitz functions $A, B \colon [0,+\infty) \to \C$ such that
\begin{equation}\label{flimit}
 A_q(t) = A(t)+o(1); \quad B_q(t) = B(t)+o(1)
\end{equation}
for all fixed $t \geq 0$.  From \eqref{qqfg} we have
$$ |A(t)-A(s)|, |B(t)-B(s)| \leq |t-s|$$
for all fixed $t,s \geq 0$.  By the Rademacher differentiation theorem, we can thus find Lebesgue measurable functions $a, b \colon [0,+\infty) \to \C$ bounded in magnitude by $1$, defined up to almost everywhere equivalence, such that
$$ A(t) = \int_0^t a(u)\ du; \quad B(t) = \int_0^t b(u)\ du$$
for all $t \in [0,+\infty)$.

We now establish some bounds on $A, B$.  Since $\chi$ has mean zero on intervals of length $q$, it is easy to see that
$$ A_q(t) = A_q(t') + o(1)$$
for all fixed $t,t' > 1$; in fact one can extend this to $t,t'>1/4$ using the Burgess bound \cite{burgess}, but we will not need to do so here.  This implies that $a$ is supported on $[0,1]$ (modulo null sets).

Next, since $\chi(n)=1$ for $n \leq q^\kappa$, we have from Mertens' theorem that
$$ A_q(t), B_q(t) = t + o(1)$$
for $t < \kappa$.  Thus $A(t)=B(t)=t$ for $t < \kappa$, and so $a(t)=b(t)=1$ for $t < \kappa$ (again up to null sets).

Next, we claim that $a,b$ obey the integral equation of Wirsing \cite{wirsing}:

\begin{lemma}[Wirsing equation]\label{wirsing}  We have
$$ t\,a(t) = \int_0^t a(u) b(t-u)\ du$$
for almost all $t > 0$.
\end{lemma}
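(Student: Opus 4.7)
The plan is to exploit the complete multiplicativity of $\chi$: combining this with the identity $\log n = \sum_{d \mid n} \Lambda(d)$ yields the pointwise relation
\[
\chi(n)\log n = \sum_{ab = n} \chi(a)\Lambda(a)\,\chi(b).
\]
I would divide this by $n$, sum over $n < q^t$, evaluate both sides at scale $\log^2 q$, and pass to the limit $q \to \infty$ via \eqref{flimit}; this should produce an integrated form of the Wirsing equation which I then differentiate in $t$.

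For the left-hand side, Abel summation against the partial sums of $\chi(n)/n$ together with the substitution $u = q^s$ gives
\[
\sum_{n < q^t} \frac{\chi(n)\log n}{n} = \log^2 q \cdot \Bigl[ t\, A_q(t) - \int_0^t A_q(s)\, ds \Bigr].
\]
Using \eqref{flimit} and $A(0) = 0$, dividing by $\log^2 q$ and letting $q \to \infty$ this converges to $tA(t) - \int_0^t A(s)\,ds = \int_0^t s\,a(s)\, ds$. For the right-hand side, I would factor the double sum as $\sum_{a < q^t} \frac{\chi(a)\Lambda(a)}{a} \cdot A_q(t - \tfrac{\log a}{\log q}) \log q$ and treat the outer sum as a Riemann--Stieltjes integral against $u \mapsto B_q(u) \log q$. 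Using the uniform Lipschitz bound \eqref{qqfg} together with Arzel\`a--Ascoli to upgrade \eqref{flimit} to uniform convergence on compact subsets, a Riemann-sum approximation with vanishing mesh shows that this equals $\log^2 q \int_0^t A(t-u)\,b(u)\,du + o(\log^2 q)$.

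Equating the two asymptotic evaluations yields the integrated identity
\[
\int_0^t s\,a(s)\, ds = \int_0^t A(t-u)\,b(u)\, du
\]
for every $t \geq 0$. Rewriting the right-hand side as $\int_0^t a(v)\,B(t-v)\,dv$ via Fubini (using $A(0)=0$), this is a one-sided convolution of the bounded measurable $a$ with the Lipschitz $B$, so it is absolutely continuous in $t$ with derivative $\int_0^t a(u)\,b(t-u)\,du$ at almost every $t$. Differentiating at Lebesgue points of $a$ then produces $t\,a(t) = \int_0^t a(u)\,b(t-u)\, du$ for almost every $t > 0$, as required.

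The main delicate point is the uniformity in the Riemann--Stieltjes step: one needs to ensure the approximation error in the outer sum is $o(\log^2 q)$ uniformly, rather than merely $o(\log^2 q)$ for each fixed mesh. This is handled cleanly by the uniform Lipschitz bound \eqref{qqfg}, which forces both $A_q \to A$ and $B_q \to B$ uniformly on $[0,t]$, so that the mesh parameter can be sent to zero after $q \to \infty$.
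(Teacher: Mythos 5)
Your proposal is correct and follows essentially the same route as the paper: both start from $\chi \log = (\chi\Lambda)*\chi$, evaluate the left side as $tA(t)-\int_0^t A(s)\,ds$, evaluate the right side as $\int_0^t A(t-u)\,b(u)\,du$ via a Riemann-sum/Stieltjes approximation justified by the Lipschitz bounds \eqref{qqfg}, and then differentiate the resulting integrated identity almost everywhere. The only differences are cosmetic (Abel summation versus the identity $\frac{\log n}{\log q}=t-\int_0^t 1_{n\le q^u}\,du$, and uniform convergence via Arzel\`a--Ascoli versus the paper's explicit fixed-mesh $O(1/J)$ bookkeeping).
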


This equation also holds for other means than logarithmic densities (replacing $a$, $b$ by suitable substitutes, such as the functions $t \mapsto \frac{1}{q^t} \sum_{n \leq q^t} \chi(n)$ and $t \mapsto \frac{1}{q^t} \sum_{n \leq q^t} \chi(n) \Lambda(n)$ respectively), but the arguments are more complicated, and one has to work non-asymptotically and admit some $o(1)$ errors; see \cite{wirsing}, \cite{gs}.

\begin{proof}  We start with the Dirichlet convolution identity
$$ \chi(n) \log n = (\chi \Lambda) * \chi(n)$$
and conclude for any fixed $t>0$ that
\begin{equation}\label{toast}
 \frac{1}{\log^2 q} \sum_{n \leq q^t} \frac{\chi(n) \log n}{n} = \frac{1}{\log q} \sum_{d \leq q^t} \frac{\chi(d) \Lambda(d)}{d} \frac{1}{\log q} \sum_{m \leq q^t/d} \frac{\chi(m)}{m}.
\end{equation}
To estimate this expression we use a Riemann sum argument.  Let $J>0$ be a large fixed natural number.  If $q^{(j-1)t/J} \leq d < q^{jt/J}$ for some $1 \leq j \leq J$, then $\frac{1}{\log q} \sum_{m \leq q^t/d} \frac{\chi(m)}{m} = A(t - \frac{jt}{J}) + O(\frac{1}{J}) + o(1)$ (with implied constant uniform in $J$), and so the expression \eqref{toast} may be written (after using Mertens' theorem to estimate error terms) as
$$
\left( \sum_{j=1}^J A(t - \frac{jt}{J}) \frac{1}{\log q} \sum_{q^{(j-1)t/J} \leq d < q^{jt/J}} \frac{\chi(d) \Lambda(d)}{d} \right) + O\left(\frac{1}{J}\right) + o(1).$$
One has 
\begin{align*}
\frac{1}{\log q} \sum_{q^{(j-1)t/J} \leq d < q^{jt/J}} \frac{\chi(d) \Lambda(d)}{d} &= B(jt/J) - B((j-1)t/J) + o(1) \\
&= \int_{(j-1)t/J}^{jt/J} b(u)\ du + o(1) 
\end{align*}
and so (by the Lipschitz nature of $A$), the previous expression becomes
$$ \int_0^1 A(t-u) b(u)\ du + O\left(\frac{1}{J}\right) + o(1).$$
As $J$ can be arbitrarily large, we conclude that
$$  \frac{1}{\log^2 q} \sum_{n \leq q^t} \frac{\chi(n) \log n}{n}  = \int_0^t A(t-u) b(u)\ du  + o(1).$$
On the other hand, from the identity $\frac{\log n}{\log q} = t - \int_0^t 1_{n \leq q^u}\ du$ and \eqref{flimit} we see (after a Riemann sum argument as before) that
$$ \frac{1}{\log^2 q} \sum_{n \leq q^t} \frac{\chi(n) \log n}{n} = t A(t) - \int_0^t A(u)\ du + o(1)$$
and hence
$$ t\,A(t) - \int_0^t A(u)\ du = \int_0^t A(t-u) b(u)\ du$$
for all $t$.  Differentiating using the Lebesgue differentiation theorem, we conclude that
$$ t\,a(t) = \int_0^t a(t-u) b(u)\ du $$
almost everywhere, as desired.  
\end{proof}

We will use this equation, together with some complex analysis and the previously established compact support of $a$, to derive the following consequence:

\begin{corollary} $b$ is not compactly supported (up to null sets).  
\end{corollary}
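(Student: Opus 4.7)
The plan is to assume for contradiction that $b$ vanishes (a.e.) outside some compact interval $[0,T]$, and to derive a contradiction from the Wirsing equation of Lemma \ref{wirsing} by passing to the Laplace transform. Since $a$ is already known to be supported on $[0,1]$, and both $a$ and $b$ are bounded in magnitude by $1$ and compactly supported, the Laplace transforms
$$\hat a(s) := \int_0^\infty a(t)e^{-st}\,dt, \qquad \hat b(s) := \int_0^\infty b(t)e^{-st}\,dt$$
are entire functions of $s\in\C$ of exponential type. Under the standard identities $\mathcal{L}\{t a(t)\}(s) = -\hat a'(s)$ and $\mathcal{L}\{a*b\}(s) = \hat a(s)\hat b(s)$, the Wirsing identity $t\,a(t) = (a*b)(t)$ converts into the logarithmic-derivative relation
$$-\frac{\hat a'(s)}{\hat a(s)} = \hat b(s).$$

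Next I would show that $\hat a$ has no zero in $\C$. Indeed, if $\hat a$ vanished to some order $k\geq 1$ at a point $s_0\in\C$, then $\hat a'/\hat a$ would have a simple pole at $s_0$ with residue $k$, contradicting the fact that $-\hat b$ is entire. Hence $\hat a$ is zero-free on the entire plane.

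Since $a$ is bounded and supported on $[0,1]$, one has the crude bound $|\hat a(s)| \leq C e^{|s|}$, so $\hat a$ is an entire function of order at most $1$. The Hadamard factorization theorem, applied to a zero-free entire function of order at most $1$, then forces $\hat a(s) = e^{\alpha+\beta s}$ for some constants $\alpha,\beta\in\C$. But the Riemann--Lebesgue lemma applied to $a\in L^1([0,1])$ gives $\hat a(i\tau) \to 0$ as $|\tau|\to\infty$, whereas $|e^{\alpha+i\beta\tau}| = e^{\operatorname{Re}\alpha - \tau\operatorname{Im}\beta}$ cannot decay to $0$ as both $\tau\to+\infty$ and $\tau\to-\infty$ (it blows up in at least one of the two directions, or is constant in $\tau$ if $\operatorname{Im}\beta=0$). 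This contradiction completes the proof.

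I do not expect any serious obstacle: the conceptual move is spotting that Wirsing's equation is a convolution identity, which under $\mathcal{L}$ becomes a first-order ODE whose right-hand side is forced to be entire by the compact-support hypothesis on $b$. The only matters to verify carefully are the growth bound justifying Hadamard (immediate from compact support and boundedness of $a$) and the (routine) application of Riemann--Lebesgue on the imaginary axis.
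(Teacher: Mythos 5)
Your proposal is correct and follows essentially the same route as the paper: take Laplace transforms, convert the Wirsing identity into $-(\mathcal{L}a)' = \mathcal{L}a \cdot \mathcal{L}b$, deduce that $\mathcal{L}a$ is zero-free and hence (by Hadamard, using the exponential-type bound) a pure exponential, and contradict the Riemann--Lebesgue lemma. The only cosmetic difference is that you apply Riemann--Lebesgue to $\hat a$ directly, whereas the paper first concludes from the ODE that $\mathcal{L}b$ is constant and applies Riemann--Lebesgue to $b$; both are equally valid.
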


\begin{proof}  Suppose for contradiction that $b$ is compactly supported (modulo null sets).  Now consider the Fourier-Laplace transforms
$$ {\mathcal L} a(s) \coloneqq \int_0^\infty a(t) e^{-ts}\ dt$$
and
$$ {\mathcal L} b(s) \coloneqq \int_0^\infty b(t) e^{-ts}\ dt;$$
as $a$ and $b$ are both bounded and compactly supported, the functions ${\mathcal L} a, {\mathcal L} b$ are entire and of at most exponential growth, and are not identically zero since $a,b$ are not identically zero.  On the other hand, from Lemma \ref{wirsing} and standard computations we have
\begin{equation}\label{dsl}
 - \frac{d}{ds} {\mathcal L} a = {\mathcal L} a \times {\mathcal L} b.
\end{equation}
As ${\mathcal L} b$ has no poles, ${\mathcal L} a$ cannot have any zeroes; in particular, $\log {\mathcal L} a$ is entire and at most linear growth, and must therefore be a linear function, so that ${\mathcal L} a$ is an exponential function, and hence by \eqref{dsl} ${\mathcal L} b$ is a constant function.  But this is absurd (it contradicts the Riemann-Lebesgue lemma).
\end{proof}

\begin{remark}  The above argument shows that $a$ and $b$ cannot both be compactly supported while still obeying Lemma \ref{wirsing}, except in trivial cases.  A stronger result in this regard, in which $a,b$ are allowed to decay exponentially, can be found in \cite{gs0}.  Note that the argument used to establish this corollary would have been significantly messier if one had to contend with $o(1)$ errors in the Wirsing integral equation, as one would need quantitative approximate versions of various basic qualitative facts about entire functions.  This is the main reason why we took the asymptotic limit $q \to \infty$ previously.  However, Andrew Granville (private communication) has informed me that such an approximate version of this observation was obtained in an unpublished work of Granville and Soundararajan.  (See also the recent paper \cite{gs-new} for some related results.)
\end{remark}

From the above corollary and the Lebesgue differentiation theorem, we can find fixed $1 < t_1 < t_2$ such that $|B(t_2) - B(t_1)| > 0$, and so
$$
\left|\frac{1}{\log q} \sum_{q^{t_1} < n < q^{t_2}} \frac{\chi(n) \Lambda(n)}{n}\right| \gg 1$$
for $q$ sufficiently large.  By the pigeonhole principle, we may thus find $q^{t_1} \ll x \ll q^{t_2}$ such that
$$ |\sum_{n\in [x/2,x]} \chi(n) \Lambda(n)| \gg x.$$
Of course, $x$ will depend on $q$.  Since $q = o(x)$, we may shift $n$ by $q$, using the periodicity of $\chi$, to conclude that
$$ \left|\sum_{n\in [x/2,x]} \chi(n) \Lambda(n+q)\right| \gg x.$$
On the other hand, as $\chi$ has mean zero on intervals of length $q$, we have
$$ \sum_{n\in [x/2,x]} \chi(n)  = o(x).$$
Thus if we let
$$ X \coloneqq \sum_{n\in [x/2,x]} \chi(n) (\Lambda(n+q)-1)$$
then we have
\begin{equation}\label{Xx}
 |X| \gg x
\end{equation}
for sufficiently large $q$.

We now upper bound $X$ in order to contradict \eqref{Xx}.  The first step is to expand out $\chi$ in terms of Dirichlet convolutions.
By M\"obius inversion, we can express 
$$ \chi = 1 * f = 1 + 1 * \tilde f$$
where
$$ \tilde f(n) \coloneqq f(n) - 1_{n=1}$$
and
$$ f = \chi * \mu;$$
in other words, $f$ is the multiplicative function with
$$ f(p^j) = \chi(p)^{j-1} (\chi(p)-1)$$
whenever $p$ is a prime and $j \geq 1$, with the convention that $0^0=1$.  In particular we see that $f(n)$ is only non-zero when $n$ is \emph{$q^\kappa$-rough}, by which we mean that $n$ has no prime factor less than or equal to $q^\kappa$; this implies furthermore that $\tilde f(n)$ vanishes unless $n > q^\kappa$, and that
\begin{equation}\label{fbound}
|\tilde f(n)| \ll 1
\end{equation}
whenever $n = O( q^{O(1)} )$.

Let $\nu > 0$ be a small fixed constant to be chosen later.  
We expand $X$ using the identity
\begin{equation}\label{foo}
 \chi 1_{[x/2,x]} = 1_{[x/2,x]} + (1_{[1,x^\nu)} * \tilde f) 1_{[x/2,x]} + (1_{[x^\nu,q^{-\kappa}x]} * \tilde f) 1_{[x/2,x]}
\end{equation}
where we have used the fact that $\tilde f(n)$ vanishes for $n < q^\kappa$.   This gives the splitting
$$ X = X_1 + X_2 + X_3$$
where
\begin{align*}
X_1 &= \sum_{n\in [x/2,x]} (\Lambda(n+q)-1) \\
X_2 &= \sum_{n\in [x/2,x]} (1_{[1,x^\nu)} * \tilde f)(n) (\Lambda(n+q)-1) \\
X_3 &= \sum_{n\in [x/2,x]} (1_{[x^\nu, q^{-\kappa} x]} * \tilde f)(n) (\Lambda(n+q)-1).
\end{align*}
From the prime number theorem we have
$$ X_1 = o(x).$$
For $X_2$, we use the triangle inequality to bound
$$ |X_2| \leq \sum_{d < x^\nu} \sum_{\frac{x}{2d} \leq m \leq \frac{x}{d}} |\tilde f(m)| (\Lambda(dm+q)+1)$$
We claim that
\begin{equation}\label{ddo}
 \sum_{\frac{x}{2d} \leq m \leq \frac{x}{d}} |\tilde f(m)| \Lambda(dm+q) \ll \frac{x}{\phi(d) \log x} 
\end{equation}
and
\begin{equation}\label{ddo-2}
 \sum_{\frac{x}{2d} \leq m \leq \frac{x}{d}} |\tilde f(m)| \ll \frac{x}{d \log x} 
\end{equation}
for all $d < x^\nu$, and hence
$$ X_2 \ll \nu x$$
with implied constant independent of $\nu$.

We first prove \eqref{ddo}.  From \eqref{fbound} we have $|\tilde f(m)| \Lambda(dm+q) = O(\log x)$, and this expression vanishes unless $m$ and $dm+q$ are both $q^\kappa$-rough, except for a small exceptional contribution (coming from when $dm+q$ is the power of a small prime) that can easily be seen to be negligible.  Removing this exceptional contribution, we see that we are removing two residue classes mod $p$ from the interval of $m$ for each prime $p < x^\kappa$ not dividing $d$.  Using a standard upper bound sieve (see e.g. \cite{FRI}), we conclude that the number of surviving summands $m$ is $O( \frac{x}{\phi(d) \log^2 x} )$, and the claim follows.  The bound \eqref{ddo-2} is established similarly, except now we bound $|\tilde f(m)| = O(1)$ and we remove just a single residue class for each prime $p$, rather than two.

Finally we turn to $X_3$.  We expand
$$
X_3 = \sum_{q^\kappa \ll r \ll x^{1-\nu}} \tilde f(r) \sum_{m \in [\frac{x}{2r},\frac{x}{r}] \cap [x^\nu, q^{-\kappa} x]} (\Lambda(rm+q)-1).$$
The contribution when $r \sim q^\kappa$ or $r \sim x^{1-\nu}$ can be seen to be $O( \frac{x}{\log x} )$ using the Brun-Titchmarsh inequality (and upper bound sieve bounds on $q^\kappa$-rough numbers, as in the estimation of $X_2$).  The contribution when $r$ is divisible by $q$ can be treated similarly (in fact one has the better bound of $O(x/q)$ in this case).  So we may write
$$ X_3 = \sum_{2 q^\kappa < r < \frac{1}{2} x^{1-\nu}; (r,q)=1} \tilde f(r) \sum_{\frac{x}{2r} \leq m \leq \frac{x}{r}} (\Lambda(rm+q)-1) + o( x )$$
or equivalently (since $q$ is significantly smaller than $x$)
$$ X_3 = \sum_{2 q^\kappa < r < \frac{1}{2} x^{1-\nu}; (r,q)=1} \tilde f(r) \sum_{n \in [x/2,x]: n = q\ (r)} (\Lambda(n)-1) + o( x ).$$
Invoking the Elliott-Halberstam conjecture and the prime number theorem, we then have
$$ X_3 = \sum_{2 q^\kappa < r < \frac{1}{2} x^{1-\nu}; (r,q)=1} \tilde f(r) \left(\frac{1}{\phi(r)} \frac{x}{2} - \frac{1}{r} \frac{x}{2}\right) + o( x ).$$
If $r$ contributes to the above sum, then it is the product of $O(1)$ primes of size at least $q^\kappa$, and so $\frac{1}{\phi(r)} = \frac{1}{r} + O( q^{-\kappa} \frac{1}{r} )$.  From this we see that
$$ X_3 = o(x).$$
Putting all this together, we conclude that
$$ |X| \ll (\nu+o(1)) x,$$
contradicting \eqref{Xx} for $\nu$ small enough. This completes the proof of Theorem \ref{eh1}.

\begin{remark} Our arguments here do not easily give any effective quantitative bound on $n(p)$ due to our use of asymptotic limits; in particular, the fixed quantities $t_1,t_2$ appearing above were obtained by what is essentially a compactness argument, and thus not obviously effective.  It is likely that a more carefully quantitative version of the above argument (perhaps using the estimates from \cite{gs}) can make this portion of the argument effective, thus allowing one to derive partial progress on the Vinogradov conjecture from sufficiently strong partial progress on the Elliott-Halberstam conjecture; however, the dependence of constants will be far worse than in Theorem \ref{eh3}.  We will not pursue this question further here.
\end{remark}

\begin{remark} Suppose the Burgess bound \eqref{burgess} was sharp up to epsilon factors, in the sense that one could find a sequence of primes $q$ going to infinity with $n(q) = q^{\frac{1}{4\sqrt{e}}+o(1)}$.  Then by extracting a limit to obtain the functions $a,b$ as above, we see that $a(t)=b(t)=1$ for $t \leq \frac{1}{4\sqrt{e}}$ and (from the Burgess character sum bounds) $a(t) = 0$ for $t > \frac{1}{4}$.  As was first observed by Heath-Brown (see e.g. Appendix 2 of \cite{diamond}), this information allows one in this case to determine the functions $a$ and $b$ completely.  Indeed, in the range $\frac{1}{4\sqrt{e}} \leq t < \frac{1}{2\sqrt{e}}$ one has from Lemma \ref{wirsing} that
$$ t\,a(t) = \int_0^t a(u)\ du - \int_0^{t-1/4\sqrt{e}} (1-b(t-u))\ du.$$
Bounding $1-b(t-u)$ by $2$, we thus have
$$ t\,a(t) \geq \int_0^t a(u)\ du - 2 (t-1/4\sqrt{e})$$
and thus by Gronwall's inequality
$$ a(t) \geq 1 - 2 \log(4 \sqrt{e} t).$$
(Indeed, one can verify that the difference $f(t) := a(t) - 1 + 2 \log(4\sqrt{e} t)$ obeys the inequality $t\ f(t) \geq \int_{1/4\sqrt{e}}^t f(u)\ du$ for $\frac{1}{4\sqrt{e}} \leq t < \frac{1}{2\sqrt{e}}$ with $f(\frac{1}{4\sqrt{e}}) = 0$.)  Since equality is attained for $t=1/4$ (note from Lemma \ref{wirsing} that $a$ is continuous), we must have $1-b(t-u)=2$ whenever $t \leq 1/4$ and $0 \leq u < t-1/4\sqrt{e}$, that is to say $b(t)=-1$ for $1/4\sqrt{e} < t \leq \frac{1}{4}$; also $a(t) = 1 - 2 \log(4\sqrt{e} t)$ in this range.  For $t>1/4$, Lemma \ref{wirsing} gives
$$ 0 = \int_0^t a(t-u) b(u)\ du$$
which on differentiation gives the integral equation
$$ b(t) = 2 \int_{1/4\sqrt{e}}^{1/4} b(t-u) \frac{du}{u}$$
which can then be used to complete the description of $b$, for instance via Laplace transforms.  For instance we see that $b(t)=1$ for $1/4 < t \leq \frac{1}{2\sqrt{e}}$.  One can compute that $b$ does not vanish near $t=1$, in which case the argument above shows that some improvement upon \eqref{burgess} can be made provided one can establish the Elliott-Halberstam conjecture for some $\vartheta > 1 - \frac{1}{4\sqrt{e}} \approx 0.8484$.
\end{remark}

\section{From Elliott-Halberstam to the least primitive root}\label{primsec}

We now prove Theorem \ref{eh-a}.  The key new tool is the following combinatorial statement.  Given a subset $A$ of an additive group $G = (G,+)$ and a natural number $k$, define the iterated sumset $kA$ to be the set of all sums $a_1+\dots+a_k$ where $a_1,\dots,a_k$ are elements in $A$ (allowing repetition).

\begin{proposition}[Escape from cosets]\label{for}  Let $d, m \geq 1$ be fixed integers.  Then there exists a natural number $k$ with the following property: whenever $G$ is a finite additive group whose order is the product of at most $d$ primes (counting multiplicity), and $A$ is a subset of $G$ containing zero for which one has inclusions of the form
$$ kA \subset \bigcup_{i=1}^m x_i + H_i \subsetneq G$$
for some cosets $x_i + H_i$ of subgroups $H_i$ of $G$, then $A$ is contained in a proper subgroup of $G$.
\end{proposition}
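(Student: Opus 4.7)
The plan is to prove the proposition by induction on $d$, the number of prime factors of $|G|$ counted with multiplicity, arguing the contrapositive: if $0 \in A$ and $A$ generates $G$, then for all sufficiently large $k$ (depending only on $d$ and $m$), the sumset $kA$ cannot fit inside any union of $m$ cosets whose union is a proper subset of $G$. Observe that under the hypothesis each subgroup $H_i$ is automatically proper, since if any $H_i = G$ then $x_i + H_i = G$, violating the proper-subset assumption.

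For the base case $d = 1$, the group $G$ is cyclic of prime order, so the only proper subgroup is $\{0\}$ and each coset $x_i + H_i$ is a single point; the hypothesis then reads $|kA| \leq m$. Since $0 \in A$, the sumsets are nested, $A \subset 2A \subset 3A \subset \cdots$, and whenever $sA = (s+1)A$ we have $sA + A = sA$, so $sA$ is closed under translation by $\langle A \rangle = G$, forcing $sA = G$. Consequently $|sA| \geq \min(s+1, |G|)$, and taking $k = m+1$ yields either $|kA| \geq m+2$ or $kA = G$, each contradicting the hypothesis.

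For the inductive step, assume the result for $d-1$ with constants $K(d-1, m')$ for all $m' \geq 1$, and let $H \coloneqq H_1 + \cdots + H_m$ be the subgroup jointly generated by the $H_i$. If $H = \{0\}$, all $H_i$ are trivial and the base-case counting applies verbatim. If $H$ is a nontrivial proper subgroup of $G$, then each $x_i + H_i$ lies inside a coset of $H$, so $kA$ is contained in at most $m$ cosets of $H$; projecting to $G/H$ (whose order has at most $d-1$ prime factors) one sees that $k\pi(A)$ lies in at most $m$ singletons in $G/H$, and the inductive hypothesis places $\pi(A)$ inside a proper subgroup of $G/H$, whose preimage is a proper subgroup of $G$ containing $A$, contradicting that $A$ generates $G$.

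The remaining case $H = G$, where the $H_i$'s jointly span $G$, is the main obstacle. The idea is to pick any nontrivial proper $H_i$, say $H_1$, and project to $G/H_1$: the images $\bar x_i + \pi(H_i)$ form at most $m$ cosets in $G/H_1$, which has at most $d-1$ prime factors in its order. If the projected union is a proper subset of $G/H_1$, the inductive hypothesis in $G/H_1$ concludes the argument just as in the easy case. Otherwise the projection fills $G/H_1$, and one passes to the fiber $H_1 = \pi^{-1}(0)$: because the original union is a proper subset of $G$, at least one fiber of $\pi$ is not fully covered, and the intersections $(x_i + H_i) \cap H_1$ decompose $kA \cap H_1$ into at most $m$ cosets of the subgroups $H_i \cap H_1$ inside $H_1$. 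After discarding cosets that would contribute all of $H_1$ (whose presence, together with the other cosets, would force the overall union to equal $G$), one obtains a proper-subset cover in $H_1$ by cosets of proper subgroups of $H_1$, to which the inductive hypothesis within $H_1$ can be applied. Making this fiber reduction precise, extending the resulting proper subgroup of $H_1$ (containing $A \cap H_1$) to a proper subgroup of $G$ containing all of $A$, and collapsing the nested $k$'s at the different inductive levels into a single constant $K(d, m)$ independent of $|G|$---likely requiring a secondary induction on $m$---is the most delicate part of the argument.
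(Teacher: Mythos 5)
Your base case and the two easy branches of the induction are sound, but there are genuine gaps precisely at the points you flag, and they are not mere bookkeeping. First, in the branch where $H = H_1+\cdots+H_m$ is a nontrivial proper subgroup, applying the inductive hypothesis in $G/H$ requires the union of the $m$ singletons $\{\pi(x_i)\}$ to be a \emph{proper} subset of $G/H$, and this does not follow from $\bigcup_i (x_i+H_i)\subsetneq G$: the enlarged cosets $x_i+H$ can cover all of $G$ even when the original cosets $x_i+H_i$ do not (e.g.\ $G=\Z/2\times\Z/4$ with $H$ of index $2$, each $H_i$ a proper subgroup of $H$, and the $x_i$ hitting both $H$-cosets). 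Second, and more seriously, the fiber reduction in the case $H=G$ does not match the shape of the statement you are inducting on. Your hypothesis concerns iterated sumsets $k'A'$ of a set $A'\ni 0$, but $kA\cap H_1$ is not such a sumset: it can be far larger than $k(A\cap H_1)$ (take $A=\{0,a\}$ with $a\notin H_1$ but $2a\in H_1$). Even if one could conclude that $A\cap H_1$ lies in a proper subgroup of $H_1$, this says nothing about whether $A$ lies in a proper subgroup of $G$, since $A$ may meet $H_1$ only at $0$ and still generate $G$.

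The missing idea is a mechanism for handling covers by cosets of \emph{several different} subgroups that jointly generate $G$. The paper's proof supplies one: pass from $k$ to $2k$ and observe that for each $y\in kA$ one has $kA\subset 2kA\cap(2kA-y)\subset\bigcup_{i,j}(x_i+H_i)\cap(x_j-y+H_j)$. Each nonempty intersection is a coset of $H_i\cap H_j$, whose ``dimension'' (number of prime factors of its order, with multiplicity) strictly drops unless $H_i=H_j$. A minimal-counterexample induction on the multiset of dimensions occurring in the cover then either strictly lowers the complexity (if some $y\in kA$ escapes the cosets $x_j-x_1+H_1$) or forces all the $H_i$ to coincide, in which case the union of cosets is shown to be a proper subgroup containing $A$. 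Without an analogue of this intersection step, your projection/fiber scheme does not close, so as written the proposal is an outline with an unresolved core case rather than a proof.
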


In the contrapositive, Proposition \ref{for} asserts that if $A$ generates $G$ and contains $0$, then the iterated sumsets $kA$ for $k$ large enough cannot be covered by a small number of cosets of subgroups of $G$, unless these cosets of subgroups already covered all of $G$.  Thus the sumsets $kA$ ``escape'' all non-trivial unions of boundedly many cosets. This result can be viewed as a simple abelian variant of the nonabelian ``escape from subvarieties'' lemma that first appeared in \cite{emo}.

Let us assume this proposition for the moment and see how it implies Theorem \ref{eh-a}.  Assume the Elliott-Halberstam conjecture, and assume for sake of contradiction that the conclusion of Theorem \ref{eh-a} failed.  Carefully negating the quantifiers, this means that we can find a sequence of primes $p$ going off to infinity, with $p-1$ being the product of $O(1)$ primes, and a fixed $\kappa>0$, with the property that the least primitive root of $\Z/p\Z$ is at least $p^\kappa$.

Using a discrete logarithm, we have an isomorphism $\log: (\Z/p\Z)^\times \to G$ from the multiplicative group $(\Z/p\Z)^\times$ to the additive cyclic group $G := \Z/(p-1)\Z$.  If $n$ is a natural number less than $p^\kappa$, then by hypothesis $n$ is not a primitive root of $(\Z/p\Z)^\times$, which implies that
$$ \log(n) \subset \bigcup_{r|p-1: r < p-1} \{ x \in G: rx = 0\} \subsetneq G.$$
In particular, for any natural number $k$, if we set $A := \{ \log(n): 1 \leq n < p^{\kappa/k} \}$, then
$$ kA \subset \bigcup_{r|p-1: r < p-1} \{ x \in G: rx = 0\} \subsetneq G.$$
Since $\log(1)=0$, $A$ contains $0$.  Applying Proposition \ref{for} (and using the hypothesis that $p-1$ is the product of $O(1)$ primes), we conclude (for $k$ large enough) that $A$ is contained in a proper subgroup of $G$.  Equivalently, $A$ lies in the kernel of a primitive character $\chi$ of conductor $p$, thus $\chi(n) = 1$ for all $n < p^{\kappa/k}$.  But this contradicts Conjecture \ref{vinconj-2}, which as we saw in the previous section was a consequence of the Elliott-Halberstam conjecture.

It remains to prove Proposition \ref{for}.  To illustrate the proposition, let us first give a simple case when $G$ is a direct product $H_1 \times H_2$ and we are given that $0 \in A$ and
$$ 2A \subset (H_1 \times \{0\}) \cup (\{0\} \times H_2).$$
We claim that this forces either $A \subset H_1 \times \{0\}$ or $A \subset \{0\} \times H_2$.  Indeed, if neither of these statements were true, then either there would exist $a \in A$ that was outside both $H_1 \times \{0\}$ and $\{0\} \times H_2$, or else there would exist $a_1, a_2 \in A$ with $a_1 \in H_1 \times \{0\}$, $a_2 \in \{0\} \times H_2$, and $a_1,a_2 \neq 0$.  In either case we could find an element of $2A$ ($a+0$ or $a_1+a_2$, respectively) that was outside of $(H_1 \times \{0\}) \cup (\{0\} \times H_2)$, giving the desired contradiction.  This simple special case is already sufficient to handle the case of Theorem \ref{eh-a} in which $p-1$ is the product of just two primes (that is $p-1=2q$ for some prime $q$), although in this case it turns out that the least primitive root is also the least quadratic nonresidue (for $p$ large enough, at least), so the claim in this case is already immediate from Theorem \ref{eh1}.

The general case can be obtained by a rather complicated induction on the ``complexity'' of the covering set $\bigcup_{i=1}^m x_i + H_i$, as follows.  Fix a natural number $d$.  Define a \emph{configuration} to be a tuple
\begin{equation}\label{kga}
 (k, G, A, m, (x_i+H_i)_{i=1}^m)
\end{equation}
where $k,m$ are natural numbers, $G$ is a finite additive group with $|G|$ the product of $d$ primes, $A$ is a subset of $G$ containing $0$ and not contained in any proper subgroup of $G$, and the $x_i+H_i$ are distinct cosets in $G$, such that
\begin{equation}\label{ka}
 kA \subset \bigcup_{i=1}^m x_i + H_i \subsetneq G.
\end{equation}
In particular this implies that $H_i \neq G$ for each $i$.  Our task is to show that for any configuration \eqref{kga}, that $k$ is bounded by a quantity depending only on $d$ and $m$.

Suppose for contradiction that this claim failed.  Then we can find a sequence of configurations \eqref{kga} in which $m$ stays constant, but $k$ goes to infinity.  (The other data $G, A, x_i, H_i$ in the sequence may vary arbitrarily.)

Now we define a measure of complexity of a configuration \eqref{kga}.
Given a subgroup $H$ of $G$, define the \emph{dimension} $\operatorname{dim}(H)$ of $H$ to be the quantity such that the order $|H|$ of $H$ is the product of $\operatorname{dim}(H)$ primes (counting multiplicity).  This is a natural number between $0$ and $d$, and any proper subgroup of $G$ has dimension at most $d-1$.

Given a configuration \eqref{kga}, define the \emph{complexity} of the configuration to be the tuple $(m_0,\dots,m_{d-1})$, where for each $j=0,\dots,d-1$, $m_j$ is the number of cosets $x_i+H_i$ in the configuration such that $H_i$ has dimension $j$.  Since all the $H_i$ have dimensions between $0$ and $d-1$, we see that the $m_0,\dots,m_{d-1}$ are natural numbers that sum to $m$.  In particular, if $m$ is constant, there are only finitely many possible complexities.  Thus, by passing to a subsequence if necessary, we can find a sequence of configurations \eqref{kga} whose complexity $(m_0,\dots,m_{d-1})$ stays constant, but $k$ goes to infinity.

We give the space of tuples $(m_0,\dots,m_{d-1}) \in \N^d$ the lexicographical ordering: we write $(m_0,\dots,m_{d-1}) < (n_0,\dots,n_{d-1})$ if there exists $0 \leq i \leq d-1$ such that $m_i < n_i$, and $m_j = n_j$ for $i < j \leq d-1$.  As is well known, this makes $\N^d$ a well-ordered set.

Call a tuple $(m_0,\dots,m_{d-1})$ \emph{good} if there exists a sequence of configurations \eqref{kga} with constant complexity $(m_0,\dots,m_{d-1})$, for which $k$ goes to infinity.  We have seen that there is at least one good tuple; by the well-ordering of $\N^d$, we may thus find a minimal good tuple $(m_0,\dots,m_{d-1})$.

By rounding $k$ down to an even number and then dividing by two, we may thus find a sequence of configurations
\begin{equation}\label{k2}
 (2k, G, A, m, (x_i+H_i)_{i=1}^m)
\end{equation}
of complexity $(m_0,\dots,m_{d-1})$ with $k$ going to infinity.

Let $d_*$ be the largest $j$ for which $m_{j}$ is non-zero, thus $0 \leq d_* \leq d-1$.  (note that at least one of the $m_j$ must be non-zero, otherwise the first inclusion in \eqref{ka} could not hold).  By relabeling, we may assume without loss of generality that $H_1$ has dimension $d_*$ for any configuration \eqref{k2} in the above sequence.

Consider a configuration \eqref{k2} in the above sequence, then
$$ 2kA \subset \bigcup_{i=1}^m x_i + H_i.$$
In particular, for any $y \in kA$, we have 
$$ kA \subset 2kA \cap (2kA-y) \subset \bigcup_{i=1}^m \bigcup_{j=1}^m (x_i + H_i) \cap (x_j - y + H_j).$$
Note that the set $(x_i + H_i) \cap (x_j - y + H_j)$ is either empty, or is a coset of $H_i \cap H_j$, which has dimension at most $d_*$, with equality if and only if $H_i=H_j$ has dimension $d_*$.  In particular, since all the cosets $x_j+H_j$ are assumed distinct, we see that if $H_i$ has dimension $d_*$, there is at most one set $(x_i + H_i) \cap (x_j - y + H_j)$ which is a coset of a $d_*$-dimensional subgroup.  In particular, at most $m_{d_*}$ of the $(x_i + H_i) \cap (x_j - y + H_j)$ arise as cosets of $d_*$-dimensional subgroups. 

Now suppose that we can find $y \in kA$ such that
\begin{equation}\label{yeah}
 y \not \in \bigcup_{1 \leq j \leq m: H_j = H_1} x_j - x_1 + H_1.
\end{equation}
Then we see that $x_1 + H_1 \neq x_j - y + H_j$ for any $j=1,\dots,m$.  As such, there are now at most $m_{d_*}-1$ of the $(x_i + H_i) \cap (x_j - y + H_j)$ arise as cosets of $d_*$-dimensional subgroups.  Collecting all the cosets of the form $(x_i + H_i) \cap (x_j - y + H_j)$ and eliminating duplicates, we obtain a new configuration
$$ (k, G, A, m', (x'_i+H'_i)_{i=1}^{m'})$$
which has strictly lower complexity than $(m_0,\dots,m_{d-1})$.  By the minimality of $(m_0,\dots,m_{d-1})$, this situation can only occur for finitely many of the sequence of configurations \eqref{k2}.  Thus, after discarding finitely many terms, we may assume that the situation \eqref{yeah} does not occur for any $y \in kA$; that is to say, we have
$$ kA \subset \bigcup_{1 \leq j \leq m: H_j = H_1} x_j - x_1 + H_1.$$
This gives rise to a configuration of strictly lower complexity than $(m_0,\dots,m_{d-1})$, unless $(m_0,\dots,m_{d-1}) = (0,\dots,0,m,0,\dots,0)$ (with $m$ in the $d_*$ position), and all of the $H_j$ are equal to $H_1$.  Thus, after discarding finitely many terms in the sequence, we may assume that $H_j=H_1$ for all $j$, and so
$$ kA \subset \bigcup_{j=1}^m x_j - x_1 + H_1.$$
Intersecting this with the inclusion $kA \subset \bigcup_{j=1}^m x_j + H_1$, we again obtain a configuration of lower complexity, unless the set of cosets $\{ x_j + H_1: 1 \leq j \leq m \}$ is invariant with respect to translation by $x_1$; so by discarding another finite number of terms in the sequence, we may assume that this is the case.  By permuting indices, we can then assume that $\{ x_j + H_1: 1 \leq j \leq m \}$ is invariant under translation by $x_i$ for any $1 \leq i \leq m$.  In other words, $\{ x_j + H_1: 1 \leq j \leq m \}$ is a subgroup of the quotient group $G/H_1$, so $\bigcup_{j=1}^m x_j + H_1$ is a subgroup of $G$.  But this has to be a proper subgroup by \eqref{ka}, and so $A$ is in a proper subgroup of $G$, a contradiction.

\section{Character sums from Type II sums}\label{geh-sec}

We now prove Theorem \ref{eh3}.   Suppose that Conjecture \ref{geh-conj} holds for a fixed choice of $0 < \varpi < \frac{1}{4}$.  Let $\delta > 0$ be as in Conjecture \ref{geh-conj}; we may assume that $\delta$ is small, e.g. $\delta < 1/4$.  Let $\eps > 0$ be a sufficiently small fixed quantity depending on $\delta$.  If the claim \eqref{slit} failed, then we could find a sequence of non-principal primitive characters $\chi$ with conductor $q$ going to infinity such that
$$
\left|\sum_{n < q^{1/2 - 2 \varpi + \eps}} \chi(n)\right| \gg q^{1/2 - 2 \varpi + \eps} \log^{-A} q
$$
for some fixed $A>0$.  From the pigeonhole principle we have
\begin{equation}\label{mood}
\left|\sum_{n \in [N/2,N]} \chi(n)\right| \gg N \log^{-A} q
\end{equation}
for some $N = q^{1/2 - 2\varpi + \eps} \log^{-O(A)} q$ (of course, $N$ will depend on $q$).

Set $x \coloneqq N^{\frac{1}{1/2 - 2\varpi}}$ and $M \coloneqq x/N$, thus
$$ N = x^{\frac{1}{2} - 2\varpi}; \quad M = x^{\frac{1}{2} + 2\varpi}$$
and
\begin{equation}\label{qee}
 x \geq q^{1+2\eps}.
\end{equation}
Let ${\mathcal D}$ be the set of squarefree natural numbers in $[(1-\log^{-10A-10} x)M,M]$ whose prime factors all lie in $[q^{\eps}, x^\delta]$ not dividing $q$.  Note that the number of primes dividing $q$ may be crudely bounded by $O(\log q)$ and are thus a negligible proportion of the primes in $[q^\eps,x^\delta]$.  If $\eps$ is small enough, then the prime number theorem gives the cardinality bound
\begin{equation}\label{D-small}
|{\mathcal D}| \sim M \log^{-10A-11} x.
\end{equation}
(We allow implied constants to depend on the fixed quantities $\eps,\delta,A$.)

We now set
$$ \alpha(m) \coloneqq 1_{{\mathcal D}}(m) \overline{\chi(m)}$$
and
\begin{equation}\label{beta-def}
 \beta(n) \coloneqq 1_{[N/2,N]}(n)
\end{equation}
and consider the quantity
$$ \sum_{j \leq q^{\eps}} \sum_{n \leq x} \chi(n) \alpha*\beta(n+jq).$$
Shifting $n$ by $jq$ and using the periodicity of $\chi$, we may write this as
$$ \sum_{j \leq q^{\eps}} \sum_{jq < n \leq x+jq} \chi(n) \alpha*\beta(n).$$
Since $\alpha*\beta$ is supported on $[MN/4,MN] = [x/4,x]$, this is equal (by \eqref{qee}) to
$$ \sum_{j \leq q^{\eps}} \sum_n \chi(n) \alpha * \beta(n)$$
which factorises as
$$ \sum_{j \leq q^{\eps}} \left(\sum_m \chi(m) \alpha(m)\right) \left(\sum_n \chi(n) \beta(n)\right)$$
and hence by \eqref{mood}, \eqref{D-small} we have
$$ |\sum_{n \leq x} \chi(n) \sum_{j \leq q^{\eps}} \alpha*\beta(n+jq)| \gg x q^{\eps} \log^{-11A-11} x.$$

We now ``disperse'' the $\alpha*\beta$ factors and eliminate the $\chi$ factors by a Cauchy-Schwarz argument.
Let $\gamma$ denote the quantity
\begin{equation}\label{gam}
 \gamma \coloneqq \frac{1}{x/2} \sum_n \alpha*\beta(n),
\end{equation}
which (since $\sum_n \beta(n) = (1+o(1)) \frac{N}{2})$ factorises as
\begin{equation}\label{gam2}
 \gamma = \frac{1+o(1)}{M} \sum_m \alpha(m).
\end{equation}
In particular, from \eqref{D-small} we have
\begin{equation}\label{gam-small}
\gamma = O(\log^{-10A-11} x).
\end{equation}
Since $\chi$ has mean zero on intervals of length $q$, we have
$$ |\sum_{n \leq x} \chi(n) \sum_{j \leq q^{\eps}} \gamma 1_{[x/2,x]}(n+jq)| \ll \gamma q q^{\eps} = o( x q^{\eps} \log^{-11A-11} x )$$
and thus
$$ |\sum_{n \leq x} \chi(n) \sum_{j \leq q^{\eps}} (\alpha*\beta - \gamma 1_{[x/2,x]})(n+jq)| \gg x q^{\eps} \log^{-11A-11} x.$$
Applying the Cauchy-Schwarz inequality, we conclude that
$$ \sum_{n \leq x} |\sum_{j \leq q^{\eps}} (\alpha*\beta - \gamma 1_{[x/2,x]})(n+jq)|^2 \gg x q^{2\eps} \log^{-22A-22} x,$$
which we rearrange (using the support of $\alpha*\beta - \gamma 1_{[x/2,x]}$ to remove the restriction $n \leq x$) as
\begin{equation}\label{jj'}
 |\sum_{j,j' \leq q^{\eps}} \sum_{n} (\alpha*\beta - \gamma 1_{[x/2,x]})(n) (\alpha*\beta - \gamma 1_{[x/2,x]})(n+(j'-j)q)| \gg x q^{2\eps} \log^{-22A-22} x.
\end{equation}

From the divisor bound we have $\alpha*\beta = x^{o(1)}$, and the inner sum 
$$ \sum_{n} (\alpha*\beta - \gamma 1_{[x/2,x]})(n) (\alpha*\beta - \gamma 1_{[x/2,x]})(n+(j'-j)q) $$
may then be crudely bounded as $x^{1+o(1)}$.  From this we may remove the diagonal contribution $j=j'$ from \eqref{jj'}; by symmetry
we may then reduce to the case $j' < j$.  By the pigeonhole principle, we thus have
\begin{equation}\label{jo}
|\sum_{n} (\alpha*\beta - \gamma 1_{[x/2,x]})(n) (\alpha*\beta - \gamma 1_{[x/2,x]})(n-jq)| \gg x \log^{-22A-22} x
\end{equation}
for some $1 \leq j \leq q^{\eps}$.

Let $j$ be as above.  We have
$$ \sum_n \gamma 1_{[x/2,x]}(n) \times \gamma 1_{[x/2,x]}(n-jq) = \gamma^2 \frac{x}{2} + o( x \log^{-22A-22} x ).$$
Also, the quantity $\alpha*\beta$ is supported in $[(1 - \log^{-10A-10} x) x/2, x]$. Standard divisor sum calculations using \eqref{D-small} give
\begin{equation}\label{engage}
\sum_n |\alpha*\beta(n)|  1_{[(1 - O(\log^{-10A-10} x)) x/2,x/2]}(n) = O( x \log^{-20A-21} x)
\end{equation}
and similarly
\begin{equation}\label{engage-2}
 \sum_n |\alpha*\beta(n)|  1_{[x,x(1+O( \log^{-10A-10} x))]}(n) = O( x \log^{-20A-21} x)
\end{equation}
while from \eqref{gam} one has
$$ \sum_n \alpha*\beta(n) \gamma = \gamma^2 \frac{x}{2}.$$
We conclude (using \eqref{gam-small}) that
$$ \sum_n \alpha*\beta(n) \times \gamma 1_{[x/2,x]}(n-jq) = \gamma^2 \frac{x}{2} + o( x \log^{-22A-22} x ).$$
A similar argument gives
$$ \sum_n \gamma 1_{[x/2,x]}(n) \times \alpha * \beta(n-jq) = \gamma^2 \frac{x}{2} + o( x \log^{-22A-22} x ).$$
Inserting these bounds into \eqref{jo}, we conclude that if $X$ denotes the quantity
\begin{equation}\label{Xdef}
X \coloneqq \sum_{n} \alpha*\beta(n) \alpha*\beta(n-jq)
\end{equation}
then we have
\begin{equation}\label{xpoo}
 \left|X - \gamma^2 \frac{x}{2}\right| \gg x \log^{-22A-22} x
\end{equation}
for $q$ large enough.

Now we estimate $X$ using Type II estimates, in order to contradict \eqref{xpoo}.  Expanding out the convolution $\alpha*\beta(n)$, we have
$$ X = \sum_r \alpha(r) \sum_{N/2 \leq m \leq N} \alpha*\beta(rm - jq)$$
or equivalently
$$ X = \sum_r \alpha(r) \sum_{\substack{rN/2-jq \leq n \leq rN-jq\\ n = jq\ (r)}} \alpha*\beta(n).$$
Note from the support of $\alpha$ that if $\alpha(r)$ is non-zero, then $rN/2-jq = x/2 + O( x \log^{-10A-10} x )$ and $rN-jq = x + O( x \log^{-10A-10} x )$.  A modification of \eqref{engage}, \eqref{engage-2} then shows that
$$ \sum_{\substack{rN/2+jq \leq n \leq rN+jq\\ n = jq\ (r)}} \alpha*\beta(n) = \sum_{n: n = jq\ (r)} \alpha*\beta(n) + O( \frac{x}{r} \log^{-20A-21} x )$$
and thus (by \eqref{D-small})
$$ X = \sum_r \alpha(r) \sum_{n: n = jq\ (r)} \alpha*\beta(n) + o( x \log^{-22A-22} x ).$$
From construction, we see that $jq$ is coprime to every prime between $x^{\eps}$ and $x^\delta$ that does not divide $q$, and is in particular coprime to $r$.  From the Type II estimate hypothesis, we have
$$
\sum_r |\alpha(r)| \left|\sum_{n: n = jq\ (r)} \alpha*\beta(n) - \frac{1}{\phi(r)} \sum_{n: (n,r)=1} \alpha*\beta(n)\right| \ll x \log^{-A'} x$$
for any fixed $A'>0$.  We conclude that
$$ X = \sum_r \frac{\alpha(r)}{\phi(r)} \sum_{n: (n,r)=1} \alpha*\beta(n) + o( x \log^{-22A-22} x ).$$
If $\alpha(r)$ is non-zero, then $r$ is the product of $O(1)$ primes between $q^\eps$ and $x^\delta$, and so $\frac{1}{\phi(r)} = \frac{1}{r} + O( \frac{q^{-\eps}}{r} )$; the contribution of the error $O( \frac{q^{-\eps}}{r} )$ is then $o( x \log^{-22A-22} x)$ by \eqref{gam-small}.  Also, from standard divisor bound bounds one has
$$ \sum_{n: p|n} \alpha * \beta (n) \ll \frac{x}{p} $$
for any prime $p$ between $q^\eps$ and $x^\delta$, and so
$$ \sum_{n: (n,r) \neq 1} \alpha*\beta(n) \ll q^{-\eps} x.$$
We conclude that
$$ X = \sum_r \frac{\alpha(r)}{r} \sum_{n} \alpha*\beta(n) + o( x \log^{-22A-22} x )$$
and hence by \eqref{gam}, \eqref{gam2}, \eqref{gam-small}, and the estimate $\frac{1}{r} = \frac{1}{M} + O( \frac{\log^{-10A-10} x}{M} )$ on the support of $\alpha$, one has
$$ X = \gamma^2 \frac{x}{2} + o( x \log^{-22A-22} x)$$
which contradicts \eqref{xpoo} for $x$ large enough.  This concludes the proof of Theorem \ref{eh3}.

\begin{remark} If we have $n(q) > x^\delta$, then the sequence $\alpha$ in the above argument is simply $\alpha = 1_{\mathcal D}$.  Thus, for the purposes of establishing Vinogradov's conjecture, it suffices to consider Type II sums when $\alpha$ is a sequence of the form $1_{\mathcal D}$; there is also considerable flexibility in how to choose the set ${\mathcal D}$, and other choices than the one given here are available.  For similar reasons, one can relax \eqref{qq-gen} by moving the absolute values outside of the $r$ summation.  This leads to some further numerical improvements in the $\frac{1}{68}$ exponent in \cite{polymath8a} for the purposes of the applications to Vinogradov's conjecture; see Section \ref{variant-sec} below.
\end{remark}

\section{A variant of the method}\label{variant-sec}

In this section we sketch how to modify the arguments in Section \ref{geh-sec} to be able to utilise distributional estimates for (components of) the divisor functions $\tau_k$.

We start with a similar setup with that in Section \ref{geh-sec}, namely that \eqref{mood} holds for some $N$ (and some character $\chi$ of conductor $q$ going off to infinity) and some fixed $A \geq 1$.  We set $x := q^{1+2\eps}$ for some small fixed $\eps>0$.  Let $k \geq 2$ be a fixed natural number, and suppose first that $N \leq x^{1/k}$.  Then the quantity $M := \lfloor x / N^k\rfloor$ is at least $1$.  If we set $\alpha(m) := \overline{\chi(m)} 1_{[(1- \log^{10A} x) M,M]}(m)$ and $\beta(n) := 1_{[N/2,N]}(n)$, a brief calculation similar to that in the previous section reveals that
$$ \left|\sum_{j \leq q^\eps} \sum_{n \leq x} \chi(n) \alpha * \beta^{*k}(n+jq)\right| \gg x q^\eps \log^{-(10+k)A} x$$
where $\beta^{*k}$ denotes the Dirichlet convolution of $k$ copies of $\beta$; one should think of $\beta^{*k}$ here as a component of the divisor function $\tau_k = 1^{*k}$ defined on \eqref{div}. We then approximate $\alpha * \beta^{*k}$ by $\gamma \psi( n/x)$, where
$$ \psi(t) := \int_{t_1 \dots t_k = t} 1_{[1/2,1]}(t_1) \dots 1_{[1/2,1]}(t_k) \frac{dt_1 \dots dt_{k-1}}{t_1 \dots t_k}$$
is the multiplicative convolution of $k$ copies of $1_{[1/2,1]}$, and
$$ \gamma:= \frac{1}{M (N/2)^k} \sum_n \alpha * \beta^{*k}(n).$$
A repetition of the arguments of the previous section (with $\alpha * \beta^{*(k-1)}$ playing the role of $\alpha$) then shows that there is $1 \leq j \leq q^\eps$ for which one has
$$ |X - \gamma^2 x \int_\R \psi^2(t)\ dt| \gg x \log^{-(20+2k) A} x$$
where
$$ X := \sum_n \alpha * \beta^{*k}(n) \alpha * \beta^{*k}(n-jq).$$
However, a somewhat tedious calculation (similar to that in the preceding section) shows that if one has an Elliott-Halberstam type distributional estimate for $\beta^{*k}$ on residue classes to moduli up to $MN^{k-1} \sim q^{1+2\eps}/N$, one can obtain an asymptotic of the form
$$ X = \gamma^2 x \int_\R \psi^2(t)\ dt + o(x \log^{-(20+2k) A} x)$$
giving the desired contradiction.  If $\tau_k$ has a level of distribution $\theta$ for some $0 < \theta < 1$, this suggests that we can establish cancellation in sums such as $\sum_{n \leq N} \chi(n)$ whenever $N \leq q^{1/k}$ and $q^{1+2\eps}/N \leq (N^k)^{\theta-\eps}$, which suggests that $N$ can be as low as $q^{\frac{1}{1+k\theta} +\eps}$ if $\theta > 1 - \frac{1}{k}$.  For instance, using the well-known level of distribution $\theta=2/3$ for the divisor function $\tau_2$ or for the variant $\beta*\beta$ (an old observation of Linnik and Selberg, arising from the Weil bound on Kloosterman sums), this argument gives \eqref{slit} with $\varpi = \frac{1}{28}$ (in fact one can replace $\log^{-A} q$ by a power savings, because the Linnik-Selberg argument provides such a savings in the equidistribution estimate).  Using only the elementary bound of Kloosterman \cite{kloost}, one gets a level of distribution $\theta = 4/7$, corresponding to the value $\varpi = 1/60$, thus giving a slight improvement over the P\'olya-Vinogradov bound (or even the currently best known consequence of Theorem \ref{eh3}) that requires no knowledge of the Weil conjectures.

If instead $N < q^{1/k}$, one can repeat the above analysis with the convolution $\alpha * \beta^{*k}$ replaced by $\beta_1 * \dots * \beta_k$, where $\beta_i = 1_{[N_i/2,N_i]}$ and $N_1,\dots,N_k \geq 1$ are quantities with $N = N_1 \geq N_2,\dots,N_k$ and $N_1 \dots N_k = x$.  If \eqref{mood} holds for all $N_1,\dots,N_k$, then the above analysis again leads to a contradiction if $q^{1+2\eps}/N \leq x^{\theta-\eps}$, which suggests that $N$ can be as low as $q^{1-\theta+\eps}$ if $\theta \leq 1-\frac{1}{k}$.  By a numerical coincidence, the best known distribution results (at $\theta=4/7$) on $\tau_3$, due to Fouvry, Kowalski, and Michel, correspond to the same value of $\varpi$, namely $1/28$, as the Linnik-Selberg distribution result discussed above.

In the endpoint case $N = x^{1/k}$, $\alpha$ becomes trivial and the quantity $X$ discussed above is analogous to the sum
$$ \sum_{n \leq x} \tau_k(n) \tau_k(n+jq),$$
with $jq$ being slightly smaller than $x$.  Thus, if one were able to obtain good asymptotics for such sums (with error terms which were smaller than the main term by an arbitrary power of the logarithm), one would expect to be able to obtain bounds such as \eqref{slit} with $q^{1/2-2\varpi+\eps}$ replaced by a quantity slightly smaller than $q^{1/k}$.  Unfortunately, asymptotics for such sums are currently only known for $k=2$.


\begin{thebibliography}{999}

\bibitem{ankeny}
N. C. Ankeny, \emph{The least quadratic non residue}, Ann. of Math. (2) \textbf{55}, (1952). 65--72. 

\bibitem{banks}
W. D. Banks, K. Makarov, \emph{Convolutions with probability distributions, zeros of
l-functions, and the least quadratic nonresidue}, preprint.

\bibitem{bateman}
P. Bateman, H. Diamond, Analytic number theory. 
An introductory course. Monographs in Number Theory, 1. World Scientific Publishing Co. Pte. Ltd., Hackensack, NJ, 2004. 

\bibitem{bombieri} 
E. Bombieri, \emph{On the large sieve}, Mathematika 12 (1965), 201--225.


\bibitem{bfi} E. Bombieri, J. Friedlander, H. Iwaniec, \emph{Primes in
    arithmetic progressions to large moduli}, Acta Math. 156
  (1986), no. 3--4, 203--251.

\bibitem{bfi-2} E. Bombieri, J. Friedlander, H. Iwaniec, \emph{Primes
    in arithmetic progressions to large moduli. II},
  Math. Ann. 277 (1987), no. 3, 361--393.

\bibitem{bfi-3}
E. Bombieri, J. Friedlander, H. Iwaniec, \emph{Primes in arithmetic progressions to large moduli. III}, J. Amer. Math. Soc. 2 (1989), no. 2, 215--224. 

\bibitem{burgess}
D. A. Burgess, \emph{The distribution of quadratic residues and non-residues}, Mathematika \textbf{4} (1957), 106--112. 

\bibitem{burgess-2}
D. A. Burgess, \emph{On character sums and L-series. II}, Proc. London Math. Soc. (3), \textbf{13} (1963), 524--536.

\bibitem{burgess-3}
D. A. Burgess, \emph{The character sum estimate with $r = 3$}, J. London Math. Soc. (2), \textbf{33} (1986), 219--226.

\bibitem{diamond}
H. Diamond, H. Montgomery, U. Vorhauer, \emph{Beurling primes with large oscillation}, Math. Ann. \textbf{334} (2006), no. 1, 1--36. 

\bibitem{emo}
A. Eskin, S. Mozes, H. Oh, \emph{On uniform exponential growth for linear groups}, Invent. Math. \textbf{160} (2005), no. 1, 1--30. 

\bibitem{elliott}
P. D. T. A. Elliott, H. Halberstam, \emph{A conjecture in prime number theory} Symp. Math. \textbf{4} (1968), 59--72.

\bibitem{fouvry} \'E. Fouvry, \emph{Autour du th\'eor\`eme de
    Bombieri-Vinogradov}, Acta Math. 152 (1984), no. 3-4,
  219--244.

\bibitem{ft}
\'E. Fouvry, \emph{Sur le probl\`eme des diviseurs de Titchmarsh}, J. Reine Angew. Math. \textbf{357} (1985), 51--76.

\bibitem{fi} \'E. Fouvry, H. Iwaniec, \emph{On a theorem of
    Bombieri-Vinogradov type}, Mathematika 27 (1980), no. 2, 135--152
  (1981).

\bibitem{fi-2} \'E. Fouvry, H. Iwaniec, \emph{Primes in arithmetic
    progressions}, Acta Arith. 42 (1983), no. 2, 197--218.

\bibitem{fik-3} \'E. Fouvry, H. Iwaniec, \emph{ The divisor function
    over arithmetic progressions.}  (With an appendix by Nicholas
  Katz.)  Acta Arith.  61 (1992), no. 3, 271--287.

\bibitem{FRI}
J. Friedlander, H. Iwaniec, Opera de cribro. American Mathematical Society Colloquium Publications, 57. American Mathematical Society, Providence, RI, 2010.

\bibitem{FKI}
\'E. Fouvry, E. Kowalski, H. Iwaniec, \emph{On the exponent of distribution of the ternary divisor function}, preprint.

\bibitem{gallagher}
P. X. Gallagher, \emph{Bombieri's mean value theorem}, Mathematika \textbf{15} (1968), 1--6.

\bibitem{gallagher-large}
P. X. Gallagher, \emph{A large sieve density estimate near $\sigma=1$}, Invent. Math. \textbf{11} (1970), 329--339.

\bibitem{gold}
L. Goldmakher, \emph{Character sums to smooth moduli are small}, Canad. J. Math. 62 (2010), no. 5, 1099--1115.

 \bibitem{graham} S. W. Graham, C. J. Ringrose, \emph{Lower bounds for
     least quadratic nonresidues}, Analytic number theory (Allerton
   Park, IL, 1989), 269--309, Progr. Math., 85, Birkh\"auser Boston,
   Boston, MA, 1990.

\bibitem{gs}
A. Granville, K. Soundararajan, \emph{The spectrum of multiplicative functions}, Ann. of Math. (2) \textbf{153} (2001), no. 2, 407--470.

\bibitem{gs0}
A. Granville, K. Soundararajan, \emph{An uncertainty principle for arithmetic sequences}, Ann. of Math. (2) \textbf{165} (2007), no. 2, 593--635. 

\bibitem{gs-new}
A. Granville, K. Soundararajan, \emph{Large character sums: Burgess's theorem and zeros of L-functions}, preprint.

\bibitem{ik}
H. Iwaniec, E. Kowalski, Analytic number theory. 
American Mathematical Society Colloquium Publications, 53. American Mathematical Society, Providence, RI, 2004.

\bibitem{kloost}
H. D. Kloosterman, \emph{On the representation of numbers in the form $ax^2 + by^2 + cz^2 + dt^2$}, Acta Mathematica \textbf{49} (1926), 407--464.

\bibitem{linnik}
U. V. Linnik, \emph{A remark on the least quadratic non-residue}, C. R. (Doklady) Acad. Sci. URSS (N.S.) \textbf{36} (1942), 119--120.

\bibitem{mv}
H. Montgomery. R. Vaughan, Multiplicative Number Theory I. Classical Theory.  Cambridge University Press, 2006.

\bibitem{motohashi} Y. Motohashi, \emph{An induction principle for the
    generalization of Bombieri's Prime Number Theorem}, Proc. Japan
  Acad. 52 (1976), 273--275.

\bibitem{polymath8a}
D.H.J. Polymath, \emph{New equidistribution estimates of Zhang type}, Algebra \& Number Theory \textbf{8-9} (2014), 2067--2199.

\bibitem{polymath8b}
D.H.J. Polymath, \emph{Variants of the Selberg sieve, and bounded intervals containing many primes}, Research in the Mathematical Sciences 2014, 1:12. 

\bibitem{postnikov}
A. G. Postnikov, \emph{On Dirichlet L-series with the character modulus equal to the power of a prime number}, 
J. Indian Math. Soc. (N.S.) \textbf{20} (1956), 217--226. 

\bibitem{rod}
K. A. Rodoskii, \emph{On non-residues and zeros of $L$=functions}, Izv. Akad. Nauk. SSSR Ser. Mat. \textbf{20} (1956), 303--306.

\bibitem{vinogradov} A. I. Vinogradov, \emph{The density hypothesis
     for Dirichlet L-series}, Izv. Akad. Nauk SSSR
  Ser. Mat. 29 (1965), 903--934.
		
\bibitem{vin}
I. Vinogradov,  Selected works.  With a biography by K. K. Mardzhanishvili. Translated from the Russian by Naidu Psv. Translation edited by Yu. A. Bakhturin. Springer-Verlag, Berlin, 1985.


\bibitem{wirsing}
E. Wirsing, \emph{Das asymptotische Verhalten von Summen \"uber multiplikative Funktionen II}, Acta Math. Acad. Sci. Hungar. \textbf{18} (1967), 411--467. 

\bibitem{zhang} Y. Zhang, \emph{Bounded gaps between primes}, Ann. of Math. (2) \textbf{179} (2014), no. 3, 1121--1174. 
  


\end{thebibliography}
\end{document}